\newtheorem{THEOREM}{Theorem}[section]
\newtheorem{Conclusion}[THEOREM]{Conclusion}
\newtheorem{Theorem}[THEOREM]{Theorem}
\newenvironment{theorem}{\begin{Theorem}}{\end{Theorem}}
\newtheorem{Lemma}[THEOREM]{Lemma}
\newenvironment{lemma}{\begin{Lemma}}{\end{Lemma}}
\theoremstyle{definition}
\newtheorem{notation}[THEOREM]{Notation}
\newtheorem{definition}[THEOREM]{Definition}
\newtheorem{remark}[THEOREM]{Remark}
\newtheorem{Claim}[THEOREM]{Claim}
\newtheorem{Subclaim}[THEOREM]{Subclaim}
\newtheorem{Corollary}[THEOREM]{Corollary}
\newenvironment{corollary}{\begin{Corollary}}{\end{Corollary}}
\newtheorem{Proposition}[THEOREM]{Proposition}
\newenvironment{proposition}{\begin{Proposition}}{\end{Proposition}}
\def\>{\rangle}
\def\<{\langle}
\newenvironment{eqn}{\begin{equation*}}{\end{equation*}}
\newcommand{\cf}{\mathop{\rm cf}}   % cofinality - not functional
\newcommand{\rge}{\mathop{\rm rge}} % range - not functional
\newcommand{\dom}{\mathop{\rm dom}} % domain - not functional
\newcommand{\AC}{\mathop{\rm AC}} % AC - not functional
\newcommand{\DC}{\mathop{\rm DC}} % DC - not functional
\newcommand{\Ha}{\mathop{\rm H\hskip0.05pt}} % H - not functional
\newcommand{\CUT}{\mathop{\rm CUT}} % CUT - not functional
\newcommand{\FA}{\mathop{\rm FA}} % CUT - not functional
\newcommand{\Coll}{\mathop{\rm Coll}} % Coll - not functional
\newcommand{\fin}{\mathop{\hbox{\scshape Fin}}} % Fin - not functional
\renewcommand{\lim}{\mathop{\rm lim}} 
\newcommand{\otp}{\mathop{\rm otp}} 
\newcommand{\On}{\mathop{\rm On}}
\newcommand{\lh}{\mathop{\rm lh}}
\newcommand{\height}{\mathop{\rm ht}}
\newcommand{\ssup}{\mathop{\rm ssup}} 
\newcommand{\concat}{\kern-.25pt\raise4pt\hbox{$\frown$}\kern-.25pt}
\newcommand{\restrict}{\upharpoonright}  % restriction - needs symbol?
\newcommand{\on}{\upharpoonright}  % restriction - needs symbol?
\newcommand{\Pbb}{\mathbb{P}}
\newcommand{\Qbb}{\mathbb{Q}}
\newcommand{\Tbb}{\mathbb{T}}
\newcommand{\proves}{\vdash}
\def\sss{\hskip2pt}  
\def\ssss{\hskip1pt}
\def\tupof#1#2{\<\ssss #1\sss|\sss #2\ssss\>} 
\def\tup#1{\<\ssss #1\ssss\>} 
\def\setof#1#2{\{\ssss #1\sss|\sss #2\ssss\}} 
\def\set#1{\{\ssss #1\ssss\}} 
\newcommand{\cardrule}{\hrule height.2pt}
\newcommand{\cardrulefill}{\cleaders\cardrule\hfill}
\newcommand{\cardchar}[1]{\vbox{\ialign{##\crcr
    \cardrulefill\crcr\noalign{\kern1pt\nointerlineskip}
    $\hfil\displaystyle{#1}\hfil$\crcr}}}
\newcommand{\card}[1]{\cardchar{\cardchar{#1}}}
\newcommand{\scardchar}[1]{\vbox{\ialign{##\crcr
    \cardrulefill\crcr\noalign{\kern1pt\nointerlineskip}
    $\hfil\scriptstyle{#1}\hfil$\crcr}}}
\newcommand{\scard}[1]{\scardchar{\scardchar{#1}}}
\newcommand\extrafootertext[1]{%
    \bgroup
    \renewcommand\thefootnote{\fnsymbol{footnote}}%
    \renewcommand\thempfootnote{\fnsymbol{mpfootnote}}%
    \footnotetext[0]{#1}%
    \egroup
}
\newcommand{\Addresses}{{% additional braces for segregating \footnotesize
  \bigskip
  \footnotesize

  \textsc{Departmento de Matem\'atica, Universidade Federal de Bahia,
    Av. Adhemar de Barros, S/N, Ondina, Salvador, Bahia, Brazil. CEP: 40170-110.}\par\nopagebreak

  \textit{E-mail address}, Diego Lima Bomfim: \texttt{bomfim.diego@ufba.br@ufba.br}
  \vskip-9pt
  \textit{E-mail address}, Charles Morgan: \texttt{charlesmorgan@ufba.br}
  \vskip-12pt
  \textit{E-mail address}, Samuel Gomes da Silva: \texttt{samuel@ufba.br}
}}
\let\bemph\relax % there's no \RedeclareTextFontCommand
\DeclareTextFontCommand{\bemph}{\bfseries\em}
\title{On forcing axioms and weakenings of the Axiom of Choice}
\author{Diego Lima Bomfim, Charles Morgan and Samuel Gomes da Silva}
\begin{document}

\begin{abstract}
  We prove forcing axiom equivalents of two families of weakenings of
  the axiom of choice: a trichotomy principle for cardinals isolated
  by L\'evy, $\Ha_\kappa$, and $\DC_\kappa$, the principle of
  dependent choices generalized to cardinals $\kappa$, for regular
  cardinals $\kappa$.  Using these equivalents we obtain new forcing
  axiom formulations of $\AC$.

A point of interest is that we use a new template for forcing axioms. For the class of forcings
to which we asks that the axioms apply, we do not ask that they apply to all collections of
dense sets of a certain cardinality, but rather only for each particular forcing to a specific
family of dense sets of the cardinality in question.
\end{abstract}
\maketitle

\extrafootertext{2020 Mathematics Subject Classification: {03E25, 03E57, 03E05}.}
\extrafootertext{Keywords: dependent choice, Levy's trichotomy, forcing axioms.} 
\extrafootertext{Date: \date{\today}}
\extrafootertext{This research was carried out whilst the second author was a Professor Visitante at the
  Universidade Federal de Bahia (UFBA), under EDITAL PV 001/2019 - PROPG/ UFBA.
  His grateful for the support of UFBA under this edital and for the kind hospitality of the Departamento de Matemática.}

% \section*{}
\vfill\eject

\section{Introduction}\label{intro}
This paper had its origins in parts of the first author's master's
thesis, \cite{Bomfim22}, written under the supervision of the third
author. These sections of \cite{Bomfim22} were inspired by
\cite{Parente15} and \cite{Viale19} which, as Viale writes in the
latter article, ``focus on the interplay between forcing axioms and
various other non-constructive principles widely used in many fields
of abstract mathematics, such as the axiom of choice and Baire's
category theorem.''

One of the original objectives of the work recounted in
\cite{Bomfim22} was to find a forcing axiom equivalent of the
statement that every infinite set is Dedekind-infinite.  Subsequent
discussions between the three authors led to the broader consideration
of the interplay between forcing axioms and weakenings of the Axiom of
Choice which forms the focus of the paper.

One early paper in which various families of weakenings of the axiom of choice were considered is
L\'evy's \cite{Levy64}.  L\'evy focused, in particular, on what are nowadays called
$\AC_\alpha$ and $DC_\alpha$,\footnote{See below for definitions.} for ordinals $\alpha$,
and on a family of statements generalizing the axiom that every infinite set is Dedekind-infinite.

We remind the reader of the standard notation surrounding injective
embeddings of one set into another. Doing so allows us to introduce
this generalization immediately.
\vskip12pt

\begin{notation}  Suppose $X$ and $Y$ are sets. We write $X\preccurlyeq Y$ if there is an
  injection of $X$ into $Y$.
  We write $X\prec Y$ if $X\preccurlyeq Y$ and $Y\not\preccurlyeq X$. We write
  $X\approx Y$ if $X\preccurlyeq Y$ and $Y\preccurlyeq X$.
\end{notation}

\begin{definition} Let $X$, $Y$ be sets. We say 
  \emph{trichotomy} (for cardinals) holds for $X$ and $Y$ if
  $X\prec Y$ or $X\approx Y$ or $Y\prec X$, or, equivalently,
  $X\preccurlyeq Y$ or $Y\preccurlyeq X$,
  or $X\prec Y$ or $Y\preccurlyeq X$.
  \end{definition}

The statement `every infinite set is Dedkind-infinite' can be rephrased as
`every set is either finite or there is an injection of a $\omega$ into it'.
That is, for every set $X$ trichotomy holds between $X$ and $\omega$ (in the form
$X\prec \omega$ or $\omega\preccurlyeq X$).
As noted above, L\'evy considered a generalization of this statement.

\begin{definition} (\emph{cf.}~\cite{Levy64})
Let $\alpha$ be an ordinal.  \hbox{$\Ha_{\alpha}$  is the principle: \hskip12pt}
\[\forall X \sss\sss (X\prec \alpha\hbox{ or }\alpha\preccurlyeq X) \]
that is, for every set $X$ trichotomy holds between $X$ and $\alpha$:

\end{definition}

\begin{remark} \cite{Herrlich06} writes $\fin$ for $\Ha_\omega$, a notation used in \cite{Bomfim22}.
\end{remark}
\vskip6pt

As we recall below in Proposition (\ref{Hartogs_theorem}),
Hartogs (\cite{Hartogs1915}) proved $\AC \Longleftrightarrow$
for all ordinals $\alpha$ we have $\Ha_\alpha$. We speculate that
Lévy used the letter `H' as a glancing reference to Hartogs's work.

Our main results are to prove forcing axiom equivalents of $\Ha_\kappa$ (and consequently of
$\Ha_\omega$ -- $\fin$) and $\DC_\kappa$ for regular cardinals $\kappa$. Using these equivalents
we obtain new forcing axiom formulations of $\AC$.

A point of interest is that we use a new template for forcing axioms. For the class of forcings
to which we asks that the axioms apply, we do not ask that they apply to all collections of
dense sets of a certain cardinality, but rather only for each particular forcing to a specific
family of dense sets of the cardinality in question.

\vskip12pt

We now introduce some further notation, definitions and results, for the most part
standard for set theory, which will be used in the course of the paper.

As the paper concerns weakenings of the axiom of choice our base theory is ZF and all results, unless otherwise stated,
are ZF-results.

\begin{definition} A cardinal which is an ordinal is an \emph{initial ordinal} or an \emph{aleph}.
  \end{definition}

\begin{notation} Let $X$ be a set. We write $\card{X}$ for the cardinality of $X$.
If $X$ is a set we write $h(X)$ for its Hartogs number, the cardinal which is the least ordinal
  $\alpha$ for which there is no injection of $\alpha$ into $X$.
\end{notation}

\begin{remark} In the ensuing we will be particularly interested in the statement $X\not\prec \kappa$ for
  initial ordinals $\kappa$. This says that there is no injective embedding of $X$ into any well-ordered
  set of cardinality less than $\kappa$.
  \end{remark} 

The following lemma is trivial, however we make heavy use of the
negation of the right hand side of the conclusion as an equivalent of
the negation of the left hand side.

\begin{lemma}\label{trivial_lemma} Let $X$ be a set and $\kappa$ an initial ordinal.
  Then $X\prec \kappa$ if and only if there is some $\beta<\kappa$ and
  an injection from $\beta$ into $X$ which is surjective.
\end{lemma}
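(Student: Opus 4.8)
The plan is to prove the two implications separately, the key mechanism being that any injection into the ordinal $\kappa$ can be replaced, via the order type of its range, by a genuine bijection with an ordinal below $\kappa$. Because $\kappa$ is already canonically well-ordered, the whole argument stays inside ZF with no appeal to choice; in fact I can always produce an honest bijection rather than invoking Cantor--Schröder--Bernstein.

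First I would handle the forward direction. Assume $X\prec\kappa$, so there is an injection $f\colon X\to\kappa$ and $\kappa\not\preccurlyeq X$. Let $A=\rge(f)$, a set of ordinals, and let $\beta=\otp(A)$ with order isomorphism $e\colon A\to\beta$. Viewing $f$ as a bijection $X\to A$, the composite $(e\circ f)^{-1}\colon\beta\to X$ is a bijection, hence an injection of $\beta$ into $X$ that is surjective. Since $A\subseteq\kappa$ we have $\beta\le\kappa$, so it only remains to exclude $\beta=\kappa$: were $\beta=\kappa$, the map $(e\circ f)^{-1}$ would witness $\kappa\preccurlyeq X$, contradicting $\kappa\not\preccurlyeq X$. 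Thus $\beta<\kappa$, as wanted.

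For the converse, suppose $g\colon\beta\to X$ is a surjective injection (i.e.\ a bijection) for some $\beta<\kappa$. Composing $g^{-1}\colon X\to\beta$ with the inclusion $\beta\hookrightarrow\kappa$ yields an injection of $X$ into $\kappa$, giving $X\preccurlyeq\kappa$. To upgrade this to $X\prec\kappa$ I must show $\kappa\not\preccurlyeq X$; assuming an injection $h\colon\kappa\to X$, the composite $g^{-1}\circ h\colon\kappa\to\beta$ would be an injection of $\kappa$ into the strictly smaller ordinal $\beta$.

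The one point needing a word of justification — the main obstacle, such as it is — is precisely this last step: that an initial ordinal $\kappa$ admits no injection into any $\beta<\kappa$. This follows from the definition of initial ordinal by the same order-type device used above, for the range of such an injection is a set of ordinals of order type $\gamma\le\beta<\kappa$, producing a bijection of $\kappa$ with $\gamma<\kappa$ and so contradicting that $\kappa$ is not equinumerous with any smaller ordinal. Since every construction is mediated by the canonical well-order of $\kappa$, no form of the axiom of choice enters the argument, which is fitting for a ZF-result.
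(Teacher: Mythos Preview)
Your proof is correct and follows essentially the same approach as the paper's: well-order $X$ via the injection into $\kappa$, replace by a bijection with an ordinal below $\kappa$, and use that an initial ordinal cannot inject into a smaller ordinal. You simply spell out in detail (the order-type collapse, the exclusion of $\beta=\kappa$, and the ZF-safety) what the paper compresses into two sentences.
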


\begin{proof} If $X\prec \kappa$ it is well-orderable and $\card{X}<\kappa$, so there is a bijection from
  some cardinal less than $\kappa$ to $X$. If there is a bijection
  from some cardinal less than $\kappa$ to $X$ then the inverse
  demonstrates that $X\prec \kappa$.
  \end{proof}

\begin{notation} Let $X$ be a set. Then $[X]^{\kappa}$ is
$\setof{Y\subseteq X}{\card{Y}=\kappa}$ and $[X]^{<\kappa}$ is
  $\setof{Y\subseteq X}{\card{Y}<\kappa}$. If $\alpha\in\On$ we write
    ${^{\alpha}X}$ for $\setof{f}{f:\alpha\longrightarrow X}$ and
    ${^{<\alpha}X}$ for $\setof{f}{\exists \beta< \alpha \sss\sss f:\beta\longrightarrow X}$.
\end{notation}

We introduce two \emph{non-standard} pieces of notation.

\begin{notation}
    We write $(X)^{\alpha}$ for
    $\setof{f}{f:\alpha\longrightarrow X\hbox{ and }f\hbox{ is an injection}}$ and
    $(X)^{<\alpha}$ for $\setof{f}{\exists \beta < \alpha \sss\sss f:\beta\longrightarrow X\hbox{ such that
      }f\hbox{ is an injection}}$.
\end{notation}

    (Note that typically in set theory $(X)^{\alpha}$ denotes
    the collection of \emph{increasing} sequences of length $\alpha$
    of elements of a set $X$ which is equipped with a strict partial
    ordering -- equivalently the collection of strictly increasing injections from $\alpha$ to $X$.
    Here we do not assume any such ordering exists.)

\begin{notation}
    For a sequence of sets $t$ we write $\lh(t)$ for $\dom(t)$ and
    if we are thinking of $t$ as an element of a tree of sequences we, further, write $\height(t)$ for $\lh(t)$.
\end{notation}

\begin{definition} If $Y$ is a set of ordinals, $\ssup(Y)$ (the \emph{strong supremum} of $Y$)
  is the least ordinal greater than all the elements of $Y$: if $Y$ has a maximal element then
  $\ssup(Y) = \bigcup Y +1$, and if not then $\ssup(Y)=\bigcup Y$.
\end{definition}

Later in the paper we will need the notion of (possibly infinite) concatenation.

\begin{definition}  Let $\tau$ be an ordinal (possibly infinite)
  and let $\langle \tau_i : i < \tau \rangle$ be a sequence of non-zero ordinals.
  For convenience, define $\tau^* = \tau+1$ if $\tau$ is a sucessor ordinal, and $=\tau$ if $\tau$ is a limit.
  
  From $\langle \tau_i : i < \tau \rangle$ we define 
  a sequence of ordinals $\langle \sigma_i : i < \tau^* \rangle$ by induction on $\tau^*$.  Let $\sigma_0 = 0$.
  Given $\sigma_i$, let $\sigma_{i+1} = \sigma_i + \tau_{i}$.  If $i$ is a limit ordinal let $\sigma_i = \bigcup_{j<i} \sigma_j$.

  Now let $X$ be a set and suppose, for each $i<\tau$ and $j<\tau_i$, that $t_{ij}$ is an element of $X$.
  Let $t_i = \langle t_{ij} : j < \tau_i \rangle$ and let $t = \langle t_i : i < \tau \rangle$. 

Now we define a sequence $t^c$ of length $\ssup(\{\sigma_i : i <
\tau^* \})$ by $t^c_{\sigma_i + j} = t_{ij}$ for every $i<\tau$ and
$j<\tau_i$. We call $t^c$ \emph{the concatenation of} $t$.
\end{definition}

Finally we state $\AC_\alpha$ for the reader's convenience. We define and discuss
$\DC_\alpha$ in the following section, immediately below.

\begin{definition} ($\AC_\alpha$) Let $\alpha$ be an ordinal. $\AC_\alpha$ is the axiom that
  whenever ${\mathcal X} = \setof{X_\beta}{\beta<\alpha}$ is
  a collection of sets such that for all $\beta<\alpha$ we have $X_\beta\ne\emptyset$, there
  is a function $f:\alpha\longrightarrow \bigcup {\mathcal X}$ such that for all
  $\beta<\alpha$ we have $f(\beta) \in X_\beta$.
  \end{definition}

\vskip12pt

\section{Variants of the Axiom of Dependent Choices}\label{DC_varts}

We recall the Axiom of Dependent Choice ($DC$) and the axiom $\DC_{\alpha}$.

\begin{definition} 
  $\DC$ is the \emph{axiom of dependent choice}:\hfill\break\vskip-18pt
  for every set $X$ and
    every binary relation $R$ on $X$ for which for all $x\in X$ there is
    some $y\in X$ with $x\sss R\sss y$ there is a function
    $f:\omega\longrightarrow X$ such that for all $i<\omega$ we have
    $f(i) \sss R \sss f(i+1)$.
\end{definition}

\begin{definition}
    If $\alpha$ is an ordinal then $\DC_{\alpha}$ is the axiom:\hfill\break\vskip-18pt
    if $F:{^{<\alpha}X}\longrightarrow {\mathcal P}(X) \setminus
    \set{\emptyset}$ there is a function $g:\alpha\longrightarrow X$ such
    that for all $i<\alpha$ we have $g(i) \in F(g\on i)$.
  \end{definition}

It is well known that $\DC$ and $\DC_\omega$ are equivalent (under ZF).
However, the two equivalents inspire separate ways to weaken the axiom's force.

\vskip12pt

For $\DC$ itself we can consider restrictions on the class of binary relations.
In order to give examples of these we need a little more notation.

\begin{definition} Let $P=(\Pbb,<_\Pbb)$ be a pre-order. $P$ has the
  \emph{finite predecessor property (fpp)} if each element of $\Pbb$
  has finitely many predecessors under $<_\Pbb$. That is, for
  $p\in\Pbb$ we have $\setof{q\in\Pbb}{p <_\Pbb q}$ is finite. An
  element $p$ of $\Pbb$ is \emph{maximal} if it has no predecessors
  and \emph{minimal} if there is no $q\in \Pbb$ such that $q<_\Pbb p$.
\end{definition}

\begin{definition} Let $P=(\Pbb,<_\Pbb)$ be a partial order. $P$  is \emph{strict} if for all $x$ we have
  $x \not <_\Pbb x$. $P$ is \emph{reticulated} or \emph{a lattice} if every pair of elements has a sup and an inf:
for all $p$, $q\in \Pbb$, there are $p \wedge q$, $p\vee q$ where if $r < p$, $q$ then
$r < p \wedge q$ or $r = p \wedge q$ and if
$p$, $q < r$ then $p \vee q < r$ or $p \vee q = r$.
  \end{definition}

Diener (\cite{Diener94}) showed that $\DC$ is equivalent to
the formal restriction of $\DC$ to the class of relations that are (strict) partial orders.
Gomes da Silva \cite{Silva12} considered the following (true) weakening of $\DC$.

\begin{notation}
  \begin{itemize}
  \item[] $\DC_{l,fp}$ is the axiom of dependent choice
    restricted to strict lattices with the fpp.
    More explicitly, $\DC_{l,fp}$ is the principle that for every set
    $X$ and strict lattice relation $R$ on $X$ with the fpp,
    if additionally for all $x\in X$ there is some $y\in X$ such
    that $x \sss R \sss y$ there is a function
    $f:\omega\longrightarrow X$ such that for all $i<\omega$ we have
    $f(i) \sss R \sss f(i+1)$.
  \end{itemize}
  \end{notation}
\vskip12pt

However, the formulation of $\DC_\alpha$ also naturally leads to weakenings given by
restricting the class of functions $F$ for which the axiom applies.
One example example of such restriction is the collection of
axioms $\Ha_\alpha$ introduced in \S{}\ref{intro}.

In order to see why we think of $\Ha_{\alpha}$ as a variant of
$\DC_{\alpha}$ note it can be rewritten as the statement that for
every $X$ such that $X \not\prec \alpha$ there is a function
$g:\alpha\longrightarrow X$ such that for all $i<\omega$ we have $g(i)
\in X\setminus \rge(g\on i)$.  We can think of this formulation as
$\DC_{\alpha}$ restricted to the special cases (one for each set $X$)
of the functions $F^X_{seq}:{^{<\alpha}X}\longrightarrow {\mathcal
  P}(X) \setminus \set{\emptyset}$ given by $F^X_{seq}(t) = X\setminus
\rge(t)$. The requirement that $X \not\prec \alpha$ is (using Lemma
\ref{trivial_lemma}) exactly what is necessary to show that
$F^X_{seq}$ is well defined. For $F^X_{seq}$ is only not defined
everywhere if there is some $t\in {^{<\alpha}X}$ with $F^X_{seq}(t) =
\emptyset$, which in turn says that $t$ is an injection of some
ordinal less than $\alpha$ into $X$ which is surjective.

Perhaps more obviously, for ordinals $\alpha$ one can also see
$\Ha_\alpha$ as an instance of trichotomy for between pairs consisting of an ordinal
and an arbitrary set. 
\vskip12pt

\begin{proposition}\label{Hartogs_theorem}\cite{Hartogs1915} The following are equivalent
  \begin{enumerate}[align=parleft,labelsep=1cm, label=(\roman*)]
  \item  $\AC$,
  \item For all ordinals $\alpha$, $ \Ha_\alpha$ holds,
    \item There is a proper class of ordinals  $\alpha$ for which $\Ha_\alpha$ holds.
  \end{enumerate}
 \end{proposition}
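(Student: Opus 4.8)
The plan is to establish the cycle of implications (i)$\Rightarrow$(ii)$\Rightarrow$(iii)$\Rightarrow$(i), of which the first two should be routine and the last carries the real content. For (i)$\Rightarrow$(ii) I would argue that under $\AC$ every set is well-orderable, so for any set $X$ and any ordinal $\alpha$ the cardinalities of $X$ and $\alpha$ are comparable: either $\alpha\preccurlyeq X$, which is the second disjunct of $\Ha_\alpha$ directly, or $\alpha\not\preccurlyeq X$, in which case comparability forces $X\preccurlyeq\alpha$ and hence $X\prec\alpha$. Either way $\Ha_\alpha$ holds, for every $\alpha$. The implication (ii)$\Rightarrow$(iii) is immediate, since $\On$ is itself a proper class of ordinals.

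The substance of the proposition is (iii)$\Rightarrow$(i), and here the Hartogs number does the work. Fix a proper class $C$ of ordinals for which $\Ha_\alpha$ holds, and let $X$ be an arbitrary set; I aim to show $X$ is well-orderable, which over ZF is equivalent to $\AC$. Since $C$ is a proper class it is unbounded in $\On$, so I can choose some $\alpha\in C$ with $\alpha\geq h(X)$.

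The key point is that such an $\alpha$ cannot inject into $X$. Indeed, if $j:\alpha\longrightarrow X$ were an injection then, since $h(X)\leq\alpha$, the restriction $j\on h(X)$ would be an injection of $h(X)$ into $X$, contradicting the defining property of the Hartogs number. Hence $\alpha\not\preccurlyeq X$. Now $\Ha_\alpha$ yields $X\prec\alpha$ or $\alpha\preccurlyeq X$; the second alternative has just been ruled out, so $X\prec\alpha$, and in particular $X\preccurlyeq\alpha$. Pulling back the canonical well-ordering of $\alpha$ along an injection $X\longrightarrow\alpha$ then well-orders $X$. As $X$ was arbitrary, every set is well-orderable and (i) follows.

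The only mild obstacle I anticipate is keeping the selection of $\alpha$ clean: I must invoke the ZF-existence of $h(X)$ (Hartogs's theorem proper, which is what lets $h(X)$ be a set-sized ordinal with no injection into $X$) and, crucially, the fact that a proper class of ordinals is \emph{unbounded} in $\On$, not merely nonempty, so that an $\alpha\in C$ above $h(X)$ can be found. Everything else is bookkeeping with $\preccurlyeq$ and the definition of $\Ha_\alpha$.
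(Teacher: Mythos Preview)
Your proposal is correct and follows essentially the same route as the paper: (i)$\Rightarrow$(ii) via $\AC\Leftrightarrow\mathrm{WO}$ and comparability of well-ordered sets, (ii)$\Rightarrow$(iii) trivially, and (iii)$\Rightarrow$(i) by picking $\alpha\geq h(X)$ in the given proper class, observing that $\alpha\not\preccurlyeq X$ by definition of the Hartogs number, and concluding $X\prec\alpha$ from $\Ha_\alpha$. If anything, your write-up spells out the (iii)$\Rightarrow$(i) step more carefully than the paper's own somewhat terse version.
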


\begin{proof} $\AC$ is equivalent to $\hbox{WO}$,
  the principle that every set can be
  well ordered. This immediately gives us that $\Ha_{\alpha}$ holds
  for every ordinal $\alpha$ by the trichotomy theorem for
  ordinals.

  If $H_\kappa$ holds for all initial ordinals it holds for a proper class of initial ordinals.

  We show the third item implies every set is well-orderable.
  Given any set $X$ let $\alpha \ge h(X)$, the Hartog's number for $X$, be such that $\Ha_\alpha$ holds.
  Applying  $\Ha_{\kappa}$ to %X$ gives an injection of $X$ into the cardinal $\alpha$ and
  hence a well-ordering of $X$.
  \end{proof} 

%% \begin{remark} Although not directly relevant for the remarks below, we record both
%%   Hartogs's original theorem on trichotomy and the axiom of choice and 
%%   an interesting variant due to Tarski. For fuller discussion see Chapter
%%   III of Rubin and Rubin \cite{RubinRubin63}.

%%   Tarski showed that if there is a non-well-orderable set then for any $k<\omega$ there are
%%   $k$-many non-well-orderable sets that are incomparable under $\preccurlyeq$: there is no pair
%%   such that there is an injection from the one set into the other. Contrapositively, $\AC$ is
%%   equivalent to the statement that for given any $k<\omega$ and $k$-many sets
%%   $\setof{A_i}{i<k}$ there is some $i<j<k$ such that either $A_i \preccurlyeq A_j$ or
%%   $A_j \preccurlyeq A_i$. \emph{Charles: I'm going to have to cut this -
%%   I can't find this theorem of Tarski anywhere. Any ideas?}
%%  \end{remark}
\vskip12pt

We first of all observe that, as shown by Lévy (\cite{Levy64}),
we may restrict our attention to the cases where $\alpha$ is a cardinal.
We give proofs as the proofs in \cite{Levy64} are indirect.

\begin{proposition} \cite{Levy64}
 For any ordinal $\alpha$ we have $\Ha_\alpha$ if and only if $\Ha_{\scard{\alpha}}$.
\end{proposition}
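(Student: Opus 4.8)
The plan is to reduce everything to the single observation that $\alpha$ and its cardinality are equinumerous. Write $\kappa$ for $\scard{\alpha}$, the cardinality of $\alpha$; since $\alpha$ is an ordinal, $\kappa$ is an initial ordinal and $\alpha\approx\kappa$. Crucially this holds in ZF with no appeal to choice: both $\alpha$ and $\kappa$ are ordinals, hence canonically well-ordered, so a bijection $b\colon\kappa\longrightarrow\alpha$ is simply available. The remainder of the argument is just the remark that the relations $\preccurlyeq$ and $\prec$ occurring in the statements of $\Ha_\alpha$ and $\Ha_\kappa$ are insensitive to replacing $\alpha$ by the equinumerous set $\kappa$.

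Concretely, I would first fix such a bijection $b$ and establish, for an arbitrary set $X$, the two equivalences $X\preccurlyeq\alpha\iff X\preccurlyeq\kappa$ and $\alpha\preccurlyeq X\iff\kappa\preccurlyeq X$. Each direction is immediate by composing a witnessing injection with $b$ or $b^{-1}$: if $j\colon X\longrightarrow\alpha$ is an injection then $b^{-1}\circ j\colon X\longrightarrow\kappa$ is an injection, and symmetrically, while if $j\colon\alpha\longrightarrow X$ is an injection then $j\circ b\colon\kappa\longrightarrow X$ is an injection, and symmetrically.

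From these two equivalences I would deduce $X\prec\alpha\iff X\prec\kappa$ by unwinding the definition of $\prec$. Indeed, $X\prec\alpha$ means $X\preccurlyeq\alpha$ and $\alpha\not\preccurlyeq X$, which by the two equivalences is exactly $X\preccurlyeq\kappa$ and $\kappa\not\preccurlyeq X$, that is, $X\prec\kappa$. Consequently the disjunction defining $\Ha_\alpha$ at the set $X$, namely that $X\prec\alpha$ or $\alpha\preccurlyeq X$, is equivalent clause by clause to the disjunction that $X\prec\kappa$ or $\kappa\preccurlyeq X$, which defines $\Ha_\kappa$. As $X$ is arbitrary, the universally quantified principles $\Ha_\alpha$ and $\Ha_{\scard{\alpha}}$ coincide.

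I expect no genuine obstacle here; the proposition is essentially the assertion that only the cardinality class of $\alpha$ matters. The one point deserving the slightest care is confirming that the argument is honestly choice-free, which it is precisely because $\alpha$ and $\kappa$ are both ordinals and so the bijection $b$ exists outright in ZF. This directness is presumably the intended contrast with the indirect argument of \cite{Levy64}.
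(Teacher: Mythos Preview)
Your proposal is correct and follows essentially the same approach as the paper: both arguments fix a bijection between $\alpha$ and $\card{\alpha}$ (available in ZF since both are ordinals) and use it to transfer the relations $\prec$ and $\preccurlyeq$ clause by clause. Your write-up is simply a more explicit unpacking of the same compositional idea the paper compresses into two lines.
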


\begin{proof} For any ordinal $\alpha$ we have a bijection between $\alpha$ and $\card{\alpha}$. Hence
  if we have for every $X$ that $X\prec \alpha$ or $\alpha\preccurlyeq X$
  then for every $X$ we have $X\prec \alpha\preccurlyeq \card{\alpha}$ or $\card{\alpha} \preccurlyeq \alpha\preccurlyeq X$.
  Similarly if for all $X$ we have $X\prec \card{\alpha}$ or $\card{\alpha}\preccurlyeq X$ then
  for all $X$ we have $X\prec \card{\alpha}\preccurlyeq {\alpha}$ or ${\alpha} \preccurlyeq \card{\alpha}\preccurlyeq X$
  \end{proof}

\begin{proposition}\label{Levy_sing}\cite{Levy64} Let $\alpha$ be a singular limit ordinal.
  Then \[ ZF\proves (\forall \beta<\alpha \sss\sss DC_\beta) \Longrightarrow DC_\alpha.\]
\end{proposition}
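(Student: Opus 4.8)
The plan is to chop $\alpha$ into $\cf(\alpha)$-many consecutive blocks, each of length strictly below $\alpha$, fill in each block using $\DC_\beta$ for the appropriate $\beta<\alpha$, and then string the blocks together using $\DC_{\cf(\alpha)}$, which is among the hypotheses because $\alpha$ is singular and so $\cf(\alpha)<\alpha$. First I would fix the combinatorial skeleton. Let $\lambda=\cf(\alpha)$ and fix an increasing continuous sequence $\langle \alpha_\xi : \xi<\lambda\rangle$ cofinal in $\alpha$ with $\alpha_0=0$; such a sequence exists in ZF, being built purely from ordinals by taking least elements. For each $\xi<\lambda$ let $\beta_\xi$ be the order type of the interval $[\alpha_\xi,\alpha_{\xi+1})$, so $0<\beta_\xi<\alpha$ and $\alpha_\xi+\beta_\xi=\alpha_{\xi+1}$; continuity gives $\alpha_\eta=\ssup\set{\alpha_\xi : \xi<\eta}$ at limits, which is exactly the bookkeeping needed for the concatenation defined above to reassemble the blocks into a function with domain $\alpha$. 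Given $F:{^{<\alpha}X}\to{\mathcal P}(X)\setminus\set{\emptyset}$ I may assume $X\ne\emptyset$ (otherwise no such $F$ exists and $\DC_\alpha$ holds vacuously). Call $s:\alpha_\xi\to X$ \emph{good} if $s(i)\in F(s\on i)$ for all $i<\alpha_\xi$.

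The inner step uses $\DC_{\beta_\xi}$. Given a good $g_\xi:\alpha_\xi\to X$, define $F_{g_\xi}:{^{<\beta_\xi}X}\to{\mathcal P}(X)\setminus\set{\emptyset}$ by $F_{g_\xi}(t)=F(g_\xi\concat t)$; this is well defined and everywhere nonempty because $g_\xi\concat t$ has domain $<\alpha_{\xi+1}<\alpha$ and so lies in ${^{<\alpha}X}$. Applying $\DC_{\beta_\xi}$, valid since $\beta_\xi<\alpha$, produces $b:\beta_\xi\to X$ with $b(j)\in F_{g_\xi}(b\on j)=F\bigl((g_\xi\concat b)\on(\alpha_\xi+j)\bigr)$ for all $j<\beta_\xi$; hence $g_\xi\concat b$ is a good function with domain $\alpha_{\xi+1}$ extending $g_\xi$. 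In particular the set of blocks extending $g_\xi$ to a good function is nonempty.

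The outer step uses $\DC_\lambda$. Put $Y={^{<\alpha}X}$ and define $G:{^{<\lambda}Y}\to{\mathcal P}(Y)\setminus\set{\emptyset}$ as follows. Given $s=\langle b_\eta:\eta<\xi\rangle$, say $s$ is \emph{coherent} if each $b_\eta:\beta_\eta\to X$ and the concatenation of $\langle b_\eta:\eta<\xi\rangle$, which by the choice of the $\beta_\eta$ and continuity has domain $\alpha_\xi$, is good; if $s$ is coherent let $G(s)$ be the (nonempty, by the inner step) set of blocks extending that concatenation to a good function on $\alpha_{\xi+1}$, and otherwise let $G(s)=Y$ (a harmless nonempty default, since $\emptyset\in Y$). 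Then $\DC_\lambda$ yields $\langle b_\xi:\xi<\lambda\rangle$ with $b_\xi\in G(\langle b_\eta:\eta<\xi\rangle)$ for every $\xi$. An induction on $\xi\le\lambda$ shows that every initial segment is coherent: the successor case is the inner step, and the limit case uses that a concatenation of good blocks is good together with continuity of $\langle\alpha_\xi:\xi<\lambda\rangle$, so the default clause is never invoked along this sequence. Finally I let $g$ be the concatenation of $\langle b_\xi:\xi<\lambda\rangle$; it has domain $\alpha$, and for $i<\alpha$, writing $i=\alpha_\xi+j$ with $j<\beta_\xi$, the restriction $g\on i$ agrees with the relevant partial concatenation and $g(i)=b_\xi(j)\in F(g\on i)$, so $g$ witnesses $\DC_\alpha$.

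The main obstacle is not the idea but the bookkeeping, and I expect the delicate points to be twofold: first, verifying that restrictions of the full concatenation coincide with the partial concatenations actually fed into $F$, which is an associativity property of the concatenation operation defined above; and second, making $G$ total on all of ${^{<\lambda}Y}$, including incoherent inputs, so that $\DC_\lambda$ applies literally, while arguing that incoherent inputs never in fact arise along the chosen sequence. Continuity of the cofinal sequence is the feature that guarantees at each limit stage the partial function is both determined by, and good because of, the earlier blocks.
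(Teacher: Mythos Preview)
Your proposal is correct and follows essentially the same route as the paper: fix a continuous cofinal sequence of length $\cf(\alpha)$, use $\DC_{\beta_\xi}$ for the block lengths $\beta_\xi<\alpha$ to guarantee that each stage of the outer function $G$ returns a nonempty set of admissible extensions, and then apply $\DC_{\cf(\alpha)}$ to $G$ and concatenate. The only cosmetic differences are your default value $Y$ versus the paper's $\{\langle\,\rangle\}$ on incoherent inputs, and your more explicit discussion of coherence and the limit-stage bookkeeping.
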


\begin{proof} Let $\langle \alpha_\xi : \xi < \cf(\alpha) \rangle$
  be an increasing, continuous sequence of ordinals with $\alpha_0=0$ and
$\bigcup_{\xi<\cf(\alpha)} \alpha_\xi = \alpha$. For
  $\xi<\cf(\alpha)$ let $\gamma_\xi =  \otp([\alpha_\xi,\alpha_{\xi+1}))$.

Let $F:{}^{<\alpha}X \longrightarrow {\mathcal P}(X) \setminus \{ \emptyset \}$.
Define {${G: {}^{<\cf(\alpha)}({}^{<\alpha}X) \longrightarrow {\mathcal P}({}^{<\alpha}X) \setminus \{ \emptyset \}}$} by
for $t$ such that $\lh(t)=\xi$, and recalling the notation that $t^c$ is the
concatenation of $t$, setting 
\begin{eqn}
\begin{split}
  G(t) = & \{ s \in {^{\gamma_\xi}X}  : \forall j < \gamma_\xi \; s_j \in F(t^c \concat s\restrict j) \}
  \hbox{ if }\forall i < \xi \sss\sss \otp(t_i)=\gamma_i\hbox{, and}\\
       = & \{ \langle \rangle \} \hbox{ (the set whose only element is the empty sequence) otherwise.}
\end{split}
\end{eqn}

Note: the definition of $G$ uses $DC_\beta$ for $\beta\le
\alpha_{\xi+1}$ to ensure that the $G(t)$ are non-empty in the
substantive cases.

Let $h:\cf(\alpha) \longrightarrow {}^{<\alpha}X$ be given by $DC_{\cf(\alpha)}$ applied to $G$ and let $g = h^c$.

Then $g:\alpha\longrightarrow X$ is a function and $\forall \beta<\alpha \; g(\beta) \in F(g\restrict \beta)$.
\end{proof}
\vskip12pt

\begin{corollary} \cite{Levy64} If $\alpha$ is an ordinal then
$\DC_{\alpha}$ holds if and only if $\DC_{\scard{\alpha}}$ does. If $\alpha$ is a singular initial ordinal
then $\DC_{\alpha}$ holds if and only if for all regular $\beta<\alpha$ we have that $\DC_{\beta}$ holds.
\end{corollary}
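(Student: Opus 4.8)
The plan is to prove the corollary by combining the singular case from Proposition \ref{Levy_sing} with a transfer result between an ordinal and its cardinality analogous to the one already established for $\Ha_\alpha$. Let me sketch both halves.

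For the first statement, that $\DC_\alpha \Leftrightarrow \DC_{\scard{\alpha}}$, the key fact is that there is a bijection $b:\card{\alpha}\longrightarrow\alpha$. Given any $F:{}^{<\alpha}X\longrightarrow{\mathcal P}(X)\setminus\set{\emptyset}$, I would use $b$ to reindex: the bijection induces a bijection between ${}^{<\alpha}X$ and ${}^{<\card{\alpha}}X$ by precomposition (sending a sequence of length $\beta<\alpha$ to the corresponding sequence indexed by $b^{-1}\restrict\beta$, after translating through order types). Concretely, I would transform $F$ into some $F':{}^{<\card{\alpha}}X\longrightarrow{\mathcal P}(X)\setminus\set{\emptyset}$, apply $\DC_{\scard{\alpha}}$ to get $g':\card{\alpha}\longrightarrow X$, and then read back a function $g:\alpha\longrightarrow X$ witnessing $\DC_\alpha$ for $F$. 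The converse direction is symmetric. The one technical point to get right is that the reindexing must respect initial segments, so that the constraint $g'(i)\in F'(g'\restrict i)$ translates faithfully to $g(\beta)\in F(g\restrict\beta)$; this is a bookkeeping matter rather than a conceptual difficulty.

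\begin{proof} We first show $\DC_\alpha$ holds if and only if $\DC_{\scard{\alpha}}$ does. Fix a bijection $b:\card{\alpha}\longrightarrow\alpha$. For each $\beta<\alpha$ the restriction $b\restrict\otp(b^{-1}[\beta])$ gives a canonical correspondence between proper initial segments, and precomposition with $b$ yields a bijection $\Phi$ from ${}^{<\alpha}X$ onto ${}^{<\card{\alpha}}X$ which preserves the initial-segment relation. Given $F:{}^{<\alpha}X\longrightarrow{\mathcal P}(X)\setminus\set{\emptyset}$, assuming $\DC_{\scard{\alpha}}$, define $F'=F\circ\Phi^{-1}$ and apply $\DC_{\scard{\alpha}}$ to obtain $g':\card{\alpha}\longrightarrow X$ with $g'(i)\in F'(g'\restrict i)$ for all $i<\card{\alpha}$. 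Transporting $g'$ back along $b$ produces $g:\alpha\longrightarrow X$ with $g(\beta)\in F(g\restrict\beta)$ for all $\beta<\alpha$, as required. The reverse implication is entirely symmetric, using $b^{-1}$ in place of $b$.

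For the second statement, suppose $\alpha$ is a singular initial ordinal. If $\DC_\alpha$ holds then for each regular $\beta<\alpha$ we have $\DC_\beta$, since for such $\beta$ a function $F:{}^{<\beta}X\longrightarrow{\mathcal P}(X)\setminus\set{\emptyset}$ extends to a function on ${}^{<\alpha}X$ (sending longer sequences to $X$, say) and any witness for the extended function restricts to a witness for $F$. Conversely, suppose $\DC_\beta$ holds for all regular $\beta<\alpha$. For an arbitrary $\beta<\alpha$, by the first statement $\DC_\beta$ holds if and only if $\DC_{\scard{\beta}}$ holds, and $\card{\beta}<\alpha$; if $\card{\beta}$ is regular this is given by hypothesis, and if $\card{\beta}$ is singular it follows from Proposition \ref{Levy_sing} applied inductively to the regular cardinals below it. Hence $\DC_\beta$ holds for every $\beta<\alpha$, and a final application of Proposition \ref{Levy_sing} to $\alpha$ itself yields $\DC_\alpha$.
\end{proof}

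The main obstacle is the second converse direction, where I expect the subtlety to lie in organizing the reduction so that Proposition \ref{Levy_sing} can be invoked legitimately: that proposition requires $\DC_\beta$ for all $\beta$ below the singular ordinal, not merely for regular ones, so one must first upgrade the hypothesis from ``all regular $\beta<\alpha$'' to ``all $\beta<\alpha$.'' The clean way to do this is to argue by induction on $\beta<\alpha$ that $\DC_\beta$ holds, treating successor and regular-limit cases via the first statement (passing to $\card{\beta}$) and singular-limit cases via Proposition \ref{Levy_sing} applied to the smaller singular cardinal. Care is needed to ensure this induction is well-founded and that each application of Proposition \ref{Levy_sing} only appeals to strictly smaller ordinals; once that is checked, the final application of the proposition to $\alpha$ closes the argument.
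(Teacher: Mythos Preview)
Your argument for the first statement has a genuine gap. You claim that a bijection $b:\card{\alpha}\longrightarrow\alpha$ induces, by precomposition, a bijection $\Phi$ from ${}^{<\alpha}X$ onto ${}^{<\card{\alpha}}X$ \emph{which preserves the initial-segment relation}. No such $\Phi$ exists unless $\alpha=\card{\alpha}$. Initial segments of $\alpha$ are determined by the order on $\alpha$, and a bijection $b$ between two distinct ordinals cannot be order-preserving; in particular, for $\beta<\alpha$ the set $b^{-1}[\beta]$ is almost never an initial segment of $\card{\alpha}$, so knowing $g'\restrict i$ does not tell you $g\restrict\beta$ for the relevant $\beta$. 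Concretely, take $\alpha=\omega+\omega$, $\card{\alpha}=\omega$, and any bijection $b:\omega\to\omega+\omega$. To decide $g(\omega)\in F(g\restrict\omega)$ you must already know all of $g\restrict\omega$, i.e.\ the values $g'(b^{-1}(n))$ for all $n<\omega$; but these $b^{-1}(n)$ are scattered cofinally through $\omega$, so no finite initial segment of $g'$ suffices. Your remark that this is ``a bookkeeping matter rather than a conceptual difficulty'' is exactly where the proof fails.

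The paper avoids this problem by a different route: it proves $\DC_{\scard{\alpha}}\Rightarrow\DC_\alpha$ by induction on $\gamma\in[\card{\alpha},\card{\alpha}^+)$. The successor step $\DC_\gamma\Rightarrow\DC_{\gamma+1}$ is trivial (make one further choice), and for limit $\gamma$ in this interval one has $\cf(\gamma)\le\card{\gamma}=\card{\alpha}<\gamma$, so $\gamma$ is a singular limit ordinal and Proposition~\ref{Levy_sing} applies. Your treatment of the second statement is fine and matches the paper's, but it relies on the first statement, so as written the whole argument does not go through.
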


\begin{proof} For the first statement use induction on $[\card{\alpha},\card{\alpha}^+)$, applying
    Proposition (\ref{Levy_sing}) at limit ordinals. The second statement follows from Proposition (\ref{Levy_sing}) and the first.
  \end{proof}
\vskip6pt

\begin{remark} As remarked by L\'evy in \cite{Levy64}, if $\alpha$ is a limit cardinal,
  $\Ha_\beta$ holds for all $\beta<\alpha$ 
  and $\AC_{\cf(\alpha)}$ holds then $\Ha_\alpha$ holds.

However, L\'evy (\cite{Levy64}, for ZFA) and Pincus (\cite{Pincus69}, for ZF) gave models in which 
$\Ha_\beta$ holds for all $\beta < \alpha$ and $DC_\gamma$ holds for all $\gamma<\cf(\alpha)$,
yet $\Ha_\alpha$ (and hence also $\AC_{\cf(\alpha)}$) fails. See, also, \cite{Jech73}, Theorem (8.6) for a presentation
of these results.
\end{remark}

As a consequence of these results,
we restrict attention here to $\DC_{\kappa}$ and $\Ha_\kappa$ for cardinals $\kappa$.

We now take the opportunity to introduce some notation relevant to forcing axioms.

\begin{definition} Let $\Pbb$ be a partial order, and
  ${\mathcal D}$ a collection of subsets of $\Pbb$.
  A filter $G$ on $\Pbb$ is \emph{$\Pbb$-generic for ${\mathcal D}$} if
  for every $D\in {\mathcal D}$ we have $G\cap D\ne \emptyset$.
  We say $\FA(\Pbb,{\mathcal D})$ holds if there is
  $\Pbb$-generic filter for ${\mathcal D}$.
  We say $\FA_{\mathcal D}(\Pbb)$ holds if  ${\mathcal D}$ is a collection of dense subsets of $\Pbb$ and
  $\FA(\Pbb,{\mathcal D})$ holds.
  \end{definition}

\begin{definition} Let ${\mathcal P}$ be a class of partial orders and $\kappa$ a cardinal.
  We say $\FA_\kappa({\mathcal P})$ holds if for every $\Pbb\in {\mathcal P}$ and
  collection ${\mathcal D}$ of $\kappa$-many dense subsets of $\Pbb$ we have that
  $\FA(\Pbb,{\mathcal D})$ holds.
  \end{definition}

An intermediate notion, essentially a uniform version of the first
definition across a class of partial orders, is the following.

\begin{definition} Let ${\mathcal P}$ be a class of partial orders and
  let ${\mathfrak D}$ be a uniformly definable Function for which
  there is some cardinal $\kappa$ such that if $\Pbb \in {\mathcal P}$ then
  ${\mathfrak D}(\Pbb) = \tupof{{\mathfrak D}(\Pbb)_i}{i<\kappa}$ is a collection of
  subsets of $\Pbb$. We say $\FA({\mathcal P},{\mathfrak D})$ holds if
  for every $\Pbb\in {\mathcal P}$ we have that
  $\FA(\Pbb,{\mathfrak D}(\Pbb))$ holds.  We say $\FA_{\mathfrak D}({\mathcal P})$ holds if 
  for every $\Pbb\in {\mathcal P}$ we have that $\FA_{{\mathfrak D}(\Pbb)}(\Pbb)$ holds.
\end{definition}

    \section{Results from \cite{Bomfim22}}.

\begin{definition} $\fin$ is the statement that every infinite set is Dedekind-infinite.
  That is, whenever $X$ is infinite there is an injection
  $f:\omega\longrightarrow X$, or equivalently that $X$ has a
  countable, infinite subset.
  \end{definition}

   As remarked in the introduction, one of the original motivations of
   the work recounted in \cite{Bomfim22} was to find a forcing axiom equivalent of $\fin$. 
   We give here a sketch of the principal results of \cite{Bomfim22} on this problem.

We introduce some relevant weakenings of the axiom of choice.
  \begin{itemize}
  \item[] $\AC_{\omega}(\fin)$ is the
    axiom of choice for countably families of finite sets.
  \item[] $\CUT(\fin)$ is: every countable union of finite sets is countable.
  \end{itemize}

  We also introduce notation for a specific class of forcings.

  \begin{notation} Let $\Gamma$ be the class of pre-orders which are countable unions of
    finite pre-orders. \end{notation}

  Previously, Shannon (\cite{Shannon90}),
  Herrlich (\cite{Herrlich06} and Gomes da Silva (\cite{Silva12}) had showed the following. 

  \vskip12pt

  \begin{proposition} (\cite{Shannon90}, Corollary 2)
    \hskip10pt $\AC_{\omega}(\fin) \Longleftrightarrow \FA_\omega(\Gamma)$.
  \end{proposition}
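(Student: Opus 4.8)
The plan is to prove the two implications separately, the reverse implication being the more straightforward.

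For $\FA_\omega(\Gamma)\Longrightarrow \AC_\omega(\fin)$ I would argue as follows. Given a countable family $\tupof{A_n}{n<\omega}$ of nonempty finite sets, let $\Pbb$ be the poset of finite partial choice functions $p$ whose domain is an initial segment $\set{0,\dots,k-1}$ of $\omega$ and with $p(n)\in A_n$, ordered by reverse inclusion. Grouping conditions by the length of their domain exhibits $\Pbb$ as a countable union of finite sets (the level of length $k$ is in bijection with the product $\prod_{i<k}A_i$, which is finite in ZF), so $\Pbb\in\Gamma$. The sets $D_n=\setof{p\in\Pbb}{n\in\dom(p)}$ are dense: extending a condition to reach $n$ uses only the nonemptiness of the relevant $A_i$, which is an existence statement and needs no choice. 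A $\Pbb$-generic filter $G$ for $\set{D_n : n<\omega}$ then yields the desired choice function $\bigcup G$, since filters are downward directed, so their elements are pairwise compatible and $\bigcup G$ is a function, while meeting each $D_n$ forces $\dom(\bigcup G)=\omega$.

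For the converse, $\AC_\omega(\fin)\Longrightarrow \FA_\omega(\Gamma)$, the first step is to note that $\AC_\omega(\fin)$ implies $\CUT(\fin)$: choosing, for each $n$, one of the finitely many bijections $A_n\to\card{A_n}$ produces an injection of $\bigcup_n A_n$ into $\omega\times\omega$. Consequently every $\Pbb\in\Gamma$ is countable and may be enumerated. So fix $\Pbb=\bigcup_n P_n$ with the $P_n$ finite and increasing, together with an enumerated family $\set{D_m : m<\omega}$ of dense sets. The obstacle is twofold: building a descending chain meeting the $D_m$ is a priori a dependent choice, each condition depending on its predecessor, whereas $\AC_\omega(\fin)$ supplies only simultaneous choice from finite sets; and the natural witness sets $\setof{q\in D_m}{q\leq p}$ may themselves be infinite.

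Both difficulties are dealt with together. For each pair $(p,m)\in\Pbb\times\omega$ let $k(p,m)$ be the least $k$ such that $P_k\cap D_m$ contains some $q\leq p$ — well defined by density, and computed with no choice as a least natural number — and put $W_{p,m}=\setof{q\in P_{k(p,m)}\cap D_m}{q\leq p}$, a nonempty finite set. Since $\Pbb\times\omega$ is countable, one application of $\AC_\omega(\fin)$ selects $q(p,m)\in W_{p,m}$ for all pairs at once. This definite descent function turns the construction into plain recursion: starting from some $p_0\in\Pbb$ given by the enumeration and setting $p_{m+1}=q(p_m,m)$ yields a descending sequence with $p_{m+1}\in D_m$ for every $m$, whose upward closure is a filter that is downward directed (any two of its elements are bounded below by a sufficiently late $p_\ell$) and meets every $D_m$. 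I expect the only genuine subtlety to be exactly the one just addressed, namely trimming each witness set to its least populated finite level $P_{k(p,m)}$, so that a single use of $\AC_\omega(\fin)$ replaces what first looks like a dependent choice.
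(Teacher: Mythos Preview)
Your argument is correct in both directions. Note, however, that the paper does not give its own proof of this proposition: it simply records it as Shannon's result (\cite{Shannon90}, Corollary~2) without further argument, so there is no in-paper proof to compare against. Your write-up is essentially the standard proof; the only point worth tightening is that in the forward direction you should perhaps remark explicitly that extending a condition of length $k$ to one of length $n+1$ requires choosing from only finitely many nonempty sets $A_k,\dots,A_n$, which is a ZF theorem, and in the reverse direction you might note that the trivial case $\Pbb=\emptyset$ is vacuous. Otherwise the trick of trimming each witness set to the least level $P_{k(p,m)}$ so that a single application of $\AC_\omega(\fin)$ furnishes the entire descent function is exactly the right idea.
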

  \vskip12pt
  
  \begin{proposition} (\cite{Herrlich06})
    \hskip10pt $ \AC_{\omega}(\fin) \Longleftrightarrow \CUT(\fin)$.
  \end{proposition}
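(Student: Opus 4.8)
The plan is to prove the two implications separately, working throughout in ZF; here ``countable'' means ``injectable into $\omega$''. For the direction $\AC_{\omega}(\fin)\Rightarrow\CUT(\fin)$, suppose $\tupof{F_n}{n<\omega}$ is a sequence of finite sets. For each $n$ let $X_n$ be the set of all bijections from $\card{F_n}$ onto $F_n$; since $F_n$ is finite its size $\card{F_n}\in\omega$ is a well-defined natural number and $X_n$ is a definable nonempty finite set (a singleton when $F_n=\emptyset$). Thus $\tupof{X_n}{n<\omega}$ is a countable family of nonempty finite sets, and $\AC_{\omega}(\fin)$ furnishes a choice function $n\mapsto e_n\in X_n$. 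The map $(n,i)\mapsto e_n(i)$, defined on $\setof{(n,i)}{n<\omega,\ i<\card{F_n}}\subseteq\omega\times\omega$, is then a surjection onto $\bigcup_n F_n$. Since a subset of $\omega\times\omega$ is countable and a surjective image of a countable set is countable (send each element of the union to the least code of one of its preimages), $\bigcup_n F_n$ is countable.

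For the converse $\CUT(\fin)\Rightarrow\AC_{\omega}(\fin)$, let $\tupof{A_n}{n<\omega}$ be a countable family of nonempty finite sets. The obstruction to choosing is that elements of overlapping $A_n$ carry no record of their index, so first I would \emph{disjointify}: set $B_n=\set{n}\times A_n$, so that the $B_n$ are pairwise disjoint nonempty finite sets. Then $\bigcup_{n<\omega}B_n$ is a countable union of finite sets, and $\CUT(\fin)$ provides an injection $\phi:\bigcup_n B_n\longrightarrow\omega$. Now I define the choice function canonically: for each $n$, the finite nonempty set $\phi[B_n]\subseteq\omega$ has a least element, attained at a \emph{unique} pair $(n,a)\in B_n$ (uniqueness because $\phi$ is injective); put $f(n)=a$. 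No further appeal to choice is needed, so $f$ witnesses $\AC_{\omega}(\fin)$.

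The forward direction is routine once one observes that enumerating (equivalently, linearly ordering) each $F_n$ is a finite-set choice. The heart of the argument, and the step I expect to require the most care, is the converse: one must convert the purely existential assertion ``the union is countable'' into an actual selector. The two ingredients that make this work are the disjointification -- ensuring that the pieces $B_n$ are separated, so that selection in each coordinate is independent -- and the use of the canonical well-order of $\omega$ together with the injectivity of $\phi$ to pin down a single element of each $B_n$ without invoking choice a second time.
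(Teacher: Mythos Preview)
Your proof is correct and is the standard argument; both directions are handled cleanly, and the care you take with disjointification and with using the well-order of $\omega$ to extract a canonical selector in the converse is exactly what is needed in ZF. Note, however, that the paper does not actually supply its own proof of this proposition: it is stated with a citation to \cite{Herrlich06} and no argument is given, so there is nothing in the paper to compare your approach against.
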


  \begin{proposition} (\cite{Silva12})
    \hskip10pt $\fin \Longleftrightarrow \DC_{l,fp} + \AC_{\omega}(\fin)$.
  \end{proposition}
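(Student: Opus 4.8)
The plan is to establish the two directions separately, splitting the forward implication $\fin\Rightarrow\DC_{l,fp}+\AC_\omega(\fin)$ into its two conjuncts; the organising idea throughout is that the lattice of finite subsets of a set, ordered by inclusion, is the natural structure linking the three principles. I would first dispose of $\fin\Rightarrow\AC_\omega(\fin)$. Given nonempty finite sets $\tupof{F_n}{n<\omega}$, consider the set $B$ of finite sequences $s$ with $s(i)\in F_i$ for all $i<\lh(s)$. Finite products of nonempty sets are nonempty in ZF, so $B$ contains a sequence of each finite length and is therefore infinite; by $\fin$ fix an injection $\phi:\omega\longrightarrow B$. Since each length class of $B$ is finite, $\rge(\phi)$ meets infinitely many of them, so for every $n$ there is a least $k_n$ with $\lh(\phi(k_n))>n$; setting $f(n)=\phi(k_n)(n)\in F_n$ defines a choice function with no further appeal to choice.

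Next I would prove $\fin\Rightarrow\DC_{l,fp}$. Let $(\Pbb,<_\Pbb)$ be a strict lattice with the fpp in which every element has a strict successor. A finite strict order has a maximal element, so $\Pbb$ is infinite and $\fin$ provides an injection $g:\omega\longrightarrow\Pbb$. I would then take running joins $b_n=g(0)\vee\cdots\vee g(n)$, which form a $<_\Pbb$-nondecreasing sequence. The fpp prevents stabilisation: were $b_n$ eventually equal to some $b$, all the distinct $g(m)$ would lie $\le_\Pbb b$, giving $b$ infinitely many predecessors. Hence $b_n<_\Pbb b_{n+1}$ for infinitely many $n$, and the recursion $n_0=0$, $n_{k+1}=\min\setof{n>n_k}{b_{n_k}<_\Pbb b_n}$ extracts a strictly increasing chain $f(k)=b_{n_k}$ witnessing the instance of $\DC_{l,fp}$.

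For the converse $\DC_{l,fp}+\AC_\omega(\fin)\Rightarrow\fin$, let $X$ be infinite and apply $\DC_{l,fp}$ to $([X]^{<\omega},\subsetneq)$, finite subsets of $X$ under proper inclusion. This is a genuine lattice (meet $\cap$, join $\cup$), it has the fpp since a finite set has only finitely many subsets, and every $A$ has the successor $A\cup\set{x}$ for $x\in X\setminus A$. The axiom returns $f:\omega\longrightarrow[X]^{<\omega}$ with $f(i)\subsetneq f(i+1)$, so $W=\bigcup_i f(i)$ is an infinite subset of $X$ presented as a countable union of finite sets. By the quoted equivalence $\AC_\omega(\fin)\Leftrightarrow\CUT(\fin)$, $W$ is countable, hence countably infinite, which furnishes the injection $\omega\longrightarrow X$ required by $\fin$.

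The crux, and what I expect to be the main obstacle, is selecting the poset on which $\DC_{l,fp}$ is applied in the converse. The most obvious candidate, the tree of finite injective sequences from $X$, has the fpp but is only a meet-semilattice: two incompatible sequences have no common upper bound, and adjoining a single top to force joins immediately destroys the fpp. Replacing sequences by finite subsets under inclusion restores a true lattice with the fpp, but at the price of no longer controlling the order in which elements of $X$ are enumerated; recovering an actual enumeration of the union $W$ is precisely the task delegated to $\AC_\omega(\fin)$ via $\CUT(\fin)$. Verifying that the finite-subset lattice satisfies the fpp clause exactly as the paper states it, and that the running-join argument does not secretly smuggle in choice, are the routine points that would need care.
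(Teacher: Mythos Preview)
The paper does not supply its own proof of this proposition: it is quoted from \cite{Silva12} without argument, so there is nothing here to compare your approach against. Your proof is correct in all three parts.

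One small point you already half-anticipate. The paper's displayed definition of the fpp reads ``$\setof{q\in\Pbb}{p<_\Pbb q}$ is finite'', which taken literally (finitely many elements \emph{above} each $p$) is incompatible, in any transitive strict order, with the hypothesis that every element has a strict successor: writing $U(p)=\setof{q}{p<q}$, transitivity gives $U(q)\subsetneq U(p)$ whenever $p<q$, so any increasing chain forces a strictly decreasing sequence of finite cardinals $|U(p_0)|>|U(p_1)|>\cdots$, and the hypotheses of $\DC_{l,fp}$ would never be met. You have used the intended reading $\setof{q}{q<_\Pbb p}$ finite, and under it both your running-joins argument for $\fin\Rightarrow\DC_{l,fp}$ and your application to $([X]^{<\omega},\subsetneq)$ are sound.

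One optional streamlining of the converse: rather than routing through $\CUT(\fin)$, apply $\AC_\omega(\fin)$ directly to the nonempty finite differences $f(i+1)\setminus f(i)$ to choose $x_i\in f(i+1)\setminus f(i)$. For $i<j$ one has $x_i\in f(i+1)\subseteq f(j)$ while $x_j\notin f(j)$, so the $x_i$ are pairwise distinct and $i\mapsto x_i$ is already the desired injection $\omega\longrightarrow X$.
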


  \cite{Bomfim22} revisited the forcing axiom $\FA_\omega(\Gamma)$
  used in \cite{Shannon90} and showed $\fin$ is a 
  consequence of the combination of this forcing axiom and another
  forcing axiom. In order to state this result it is convenient to
  introduce notation for a another class of forcings.

  \begin{notation} Let $\Lambda$ be the class of trees $T$ such that there is a
    strict lattice ${\mathcal L}$ with fpp and no minimal elements and the elements of $T$
    are finite, strictly decreasing sequences from ${\mathcal L}$, ordered by extension.
  \end{notation}
  \vskip6pt 

  \begin{theorem}\label{Diego325} (\cite{Bomfim22}, Theorem 3.2.5) \hskip10pt
    $\FA_\omega(\Lambda) + \FA_\omega(\Gamma) \Longrightarrow \fin$.
  \end{theorem}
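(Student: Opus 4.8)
The plan is to route the argument through the two cited equivalences rather than attacking $\fin$ directly. First, by Shannon's result $\AC_{\omega}(\fin)\Longleftrightarrow\FA_\omega(\Gamma)$, the hypothesis $\FA_\omega(\Gamma)$ immediately yields $\AC_\omega(\fin)$. Second, by Gomes da Silva's result $\fin\Longleftrightarrow \DC_{l,fp}+\AC_{\omega}(\fin)$, it then suffices to produce $\DC_{l,fp}$ from the remaining hypothesis $\FA_\omega(\Lambda)$, allowing ourselves to use $\AC_\omega(\fin)$ as well. Thus the whole theorem reduces to the single implication $\FA_\omega(\Lambda)+\AC_\omega(\fin)\Longrightarrow \DC_{l,fp}$, after which Silva's equivalence closes the argument.

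For that central implication I would fix a set $X$ together with a strict lattice relation $R$ on $X$ that has the fpp and for which every element has an $R$-successor, and build the forcing $T$ whose conditions are the finite strictly $R$-monotone sequences recording initial segments of a candidate $f$, ordered by extension. The key move is to present $T$ as a member of $\Lambda$: viewing these conditions as finite strictly \emph{decreasing} sequences in the dual of $(X,R)$ puts $T$ in exactly the shape demanded by the definition of $\Lambda$, with the absence of $R$-maximal elements (our seriality hypothesis) becoming the `no minimal elements' clause and the lattice structure surviving dualisation. One then applies $\FA_\omega(\Lambda)$ to the countable family $D_n=\setof{t\in T}{\height(t)\ge n}$, whose density is precisely the assertion that every condition extends properly and so follows from the no-minimal-elements clause without any appeal to choice. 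The resulting $T$-generic filter $G$ meets every $D_n$, so $\bigcup G$ is a single infinite branch, i.e.\ an infinite strictly $R$-monotone sequence; read in the correct direction this is the desired $f:\omega\longrightarrow X$ with $f(i)\mathrel{R}f(i+1)$, establishing $\DC_{l,fp}$.

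It is worth noting why the dedicated axiom $\FA_\omega(\Lambda)$ is needed here and $\FA_\omega(\Gamma)$ does not already suffice: because the fpp controls chains from above rather than the downward branching, the tree of conditions can branch infinitely, so it need not lie in $\Gamma$ and no K\"onig-style extraction of a branch is available in ZF. This is exactly the gap that $\FA_\omega(\Lambda)$ is designed to fill.

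The step I expect to be the main obstacle is the faithful realisation of $(X,R)$ as a $\Lambda$-lattice: one must verify that the dualisation sending `build an $R$-increasing chain' to `descend through a lattice with no minimal elements' simultaneously respects the fpp in the direction required by the definition of $\Lambda$, and that the tree of conditions genuinely is (isomorphic to) the tree of finite strictly decreasing sequences of some strict lattice with fpp and no minimal elements. This orientation bookkeeping — matching seriality against the no-minimal-elements clause while keeping the two fpp conditions aligned — is the delicate technical core. A secondary point to monitor, since we work over ZF, is that $\AC_\omega(\fin)$ is invoked only legitimately, namely through Silva's equivalence and (should the branching be infinite) in passing from the generic filter to an honest $\omega$-indexed function, rather than smuggling in more choice than $\FA_\omega(\Gamma)$ actually provides.
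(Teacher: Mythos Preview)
The paper does not actually prove this theorem: it is cited from \cite{Bomfim22} and stated without argument. What the paper does instead, in the next section, is supersede it by establishing the stronger implication $\FA_\omega(\Lambda)\Longrightarrow\fin$ (dropping $\FA_\omega(\Gamma)$ entirely) via a quite different route: one checks directly that $\Qbb^X_\omega\in\Lambda$, applies $\FA_\omega(\Lambda)$ to the length sets $L^{X,\omega,*}_n$, and then transfers through the isomorphism of Lemma~(\ref{equiv_Coll_Qbb}) with $\Coll(\omega,X)$ to invoke $\FA({\mathit Coll}_\omega,{\mathfrak L}^\omega)\Longleftrightarrow\fin$. Your plan --- Shannon plus Silva, reducing everything to $\FA_\omega(\Lambda)\Longrightarrow\DC_{l,fp}$ --- is therefore not the paper's argument, though it is a very natural reconstruction of what the thesis proof presumably looks like, and it does explain transparently why both hypotheses appear in the statement.

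On the merits of your plan: the step you yourself flag as the main obstacle is a genuine gap you have not closed. With the paper's fpp convention (namely $\setof{q}{p <_\Pbb q}$ finite), dualising $(X,R)$ does convert seriality into the no-minimal-elements clause and does turn $R$-increasing chains into $\mathcal L$-decreasing sequences, but it does \emph{not} transport the fpp: the hypothesis on $(X,R)$ bounds $\setof{q}{p\,R\,q}$, whereas membership in $\Lambda$ would require $\setof{q}{q\,R\,p}$ to be finite, and these are opposite sets. (There is in fact a convention clash lurking here: a transitive, serial $R$ with $\setof{q}{p\,R\,q}$ always finite cannot exist, so the fpp intended for $\DC_{l,fp}$ and the one needed for $\Lambda$ cannot literally coincide.) Your proposal therefore halts exactly where the work begins. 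The paper's later argument bypasses this entirely by never touching $\DC_{l,fp}$: it works with the concrete lattice $([X]^{<\omega},\supsetneq)$, for which both the fpp and the absence of minimal elements are verifiable by inspection.
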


% \vskip-18pt  \[ \qquad \bigstar \qquad \bigstar \qquad \bigstar \]

\section{Principal results}
  
  \begin{notation}    Let $\kappa$ be a regular cardinal. 

    For each set $X$ with $X \not\prec \kappa$ let
    $\Coll(\kappa,X)$ be
    the set $(X)^{<\kappa}$, that is, the set $\setof{f\in {{^{<\kappa}}X}}{f\hbox{ is injective}}$, partially ordered by extension:
    \[ {g\le f \Longleftrightarrow \dom(f) \le \dom(g) \sss\sss\&\sss\sss g\on\dom(f) = f}.\]
    For $i<\kappa$ set $L^X_i = \setof{f\in \Coll(\kappa,X)}{i\le \dom(f)}$. 

    Let ${\mathit Coll}_\kappa$ be the class of partial orders of the form
    $\Coll(\kappa,X)$ for $X$ with $X \not\prec \kappa$.
    Let ${\mathfrak L}^\kappa$ be the Function such that for each set $X$ with $X \not\prec \kappa$
    we have ${\mathfrak L}^\kappa(\Coll(\kappa,X)) = \tupof{L^X_i}{i<\kappa}$. 
 
  \end{notation}
  \vskip12pt

 \begin{remark}\label{rmk_on_defn_coll} The elements of $\Coll(\kappa,X)$ as defined here are injective functions. In many presentations of forcing under ZFC
    the forcing to collapse (the cardinality of) a set is taken to be the set of all partial functions from the cardinal to which the cardinality is to be collapsed
    to the target set. See, for example, \cite{Kunen80}, Definition (6.1). This makes no difference in the context of forcing extensions under ZFC,
    but does play a r\^ole for us here -- see the Remark (\ref{rmk_more_on_defn_coll}) following Proposition (\ref{FA_coll_iff_DC_seq}).
\end{remark}
  
  \begin{lemma}\label{injection_into_X_implies_density} Let $\kappa$ be a  regular cardinal and  $X$ a set with $X \not\prec \kappa$.
   For each $i<\kappa$ we have $L^X_i $ is
   an open subset of $\Coll(\kappa,X)$.
    If there is an injection  $g:\kappa\longrightarrow X$ then each
    $L^X_i$ is dense in $\Coll(\kappa,X)$.
  \end{lemma}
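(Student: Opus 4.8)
The plan is to prove the two assertions separately, the first being immediate from the definition of the order and the second requiring a short transfinite recursion. For openness, recall that in $\Coll(\kappa,X)$ a condition $q\le p$ is an injective sequence end-extending $p$, so that $\dom(p)\le\dom(q)$. Hence if $p\in L^X_i$, i.e. $\dom(p)\ge i$, then any $q\le p$ satisfies $\dom(q)\ge\dom(p)\ge i$, so $q\in L^X_i$. Thus $L^X_i$ is downward closed, which is exactly what it means to be open; note that this half uses nothing about $\kappa$ beyond $i<\kappa$, and in particular does not use the existence of an injection $\kappa\longrightarrow X$.

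For density I would fix the injection $g:\kappa\longrightarrow X$, an index $i<\kappa$, and a condition $p\in\Coll(\kappa,X)$ with $\dom(p)=\beta$. If $\beta\ge i$ then $p\in L^X_i$ already and $p\le p$ witnesses density at $p$, so assume $\beta<i$. I would then build an injective $h:i\longrightarrow X$ extending $p$ by recursion on $\gamma\in[\beta,i)$: starting from $h\restrict\beta=p$, and having defined $h\restrict\gamma$, set $B_\gamma=\rge(h\restrict\gamma)$ (the values used so far) and put $h(\gamma)=g(\xi_\gamma)$, where $\xi_\gamma$ is the \emph{least} $\xi<\kappa$ with $g(\xi)\notin B_\gamma$. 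Each $h(\gamma)$ is chosen outside $B_\gamma$, so $h$ is injective; it extends $p$ and has $\dom(h)=i<\kappa$, so $h\in\Coll(\kappa,X)$, $h\le p$, and $h\in L^X_i$. This exhibits an extension of $p$ inside $L^X_i$, establishing density.

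The only point carrying any content is that the least such $\xi_\gamma$ exists at every stage, i.e. that $g$ always supplies a fresh value; this is the step I would write out with care. Here $B_\gamma=\rge(h\restrict\gamma)$ is the image of $\gamma$ under $h\restrict\gamma$, so $\card{B_\gamma}\le\card{\gamma}<\kappa$, and since $g$ is injective the preimage $g^{-1}(B_\gamma)\subseteq\kappa$ injects into $B_\gamma$ and hence also has cardinality $<\kappa$. As $\kappa$ is a cardinal, a subset of $\kappa$ of cardinality $<\kappa$ cannot be all of $\kappa$, so $\kappa\setminus g^{-1}(B_\gamma)\ne\emptyset$ and $\xi_\gamma$ is well defined. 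I would stress that specifying $\xi_\gamma$ as a \emph{least} ordinal, rather than choosing it, is what makes the recursion legitimate in ZF with no appeal to choice: the fixed injection $g$ guaranteed by $\kappa\preccurlyeq X$ provides a canonical reservoir of distinct elements of $X$, and this is precisely the extra strength, beyond $X\not\prec\kappa$, that turns the (always open) sets $L^X_i$ into dense sets. (Regularity of $\kappa$ is not in fact needed for this lemma; only that $\kappa$ is an initial ordinal is used.)
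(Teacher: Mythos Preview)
Your proof is correct. Openness is handled the same way in both arguments. For density, however, you take a genuinely different route from the paper: the paper observes that since $g$ is injective and $\rge(p)$ has size $<\kappa$, regularity of $\kappa$ gives a single bound $\beta<\kappa$ with $g^{-1}(\rge(p))\subseteq\beta$, and then concatenates $p$ with $g\on[\beta,\beta+i)$ in one stroke. You instead run a transfinite recursion along $[\beta,i)$, at each stage picking the \emph{least} $\xi$ with $g(\xi)$ fresh. Your approach is a bit longer but buys you exactly what you note at the end: it needs only that $\kappa$ is an initial ordinal, whereas the paper's bounding argument genuinely uses regularity (for singular $\kappa$ a set of size $<\kappa$ can be cofinal in $\kappa$, so no such $\beta$ need exist). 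The paper's version is slicker under the stated hypothesis; yours is more robust.
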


  \begin{proof} It is clear that $L^X_i $ is
   an open subset of $\Coll(\kappa,X)$. Given an injection  $g:\kappa\longrightarrow X$, $i<\kappa$ and
   $p\in \Coll(\kappa,X)$ with $\dom(p)=\alpha < i $ there is some $\beta<\kappa$ such that 
  $\rge(p)\cap \rge(g) \subseteq \rge(g\on \beta)$. Consequently if $q =p\concat g\on [\beta,\beta+i)$ then $q \le p$ and $q\in L^X_i$. 
\end{proof}
\vskip12pt

   Now we can state our first observations.

   \begin{proposition} $\FA({\mathit Coll_\omega},{\mathfrak L^\omega}) \Longleftrightarrow \fin$.
    \end{proposition}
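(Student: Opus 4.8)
The plan is to unwind the forcing axiom into its combinatorial content and match it directly against $\fin$. First recall that, by Lemma \ref{trivial_lemma}, the hypothesis $X \not\prec \omega$ defining the domain of $\Coll(\omega,X)$ says exactly that $X$ is infinite (not in bijection with any natural number), and that the sets $L^X_i$ produced by ${\mathfrak L}^\omega$ are precisely the injective finite sequences from $X$ of length at least $i$, ordered by end-extension with shorter sequences weaker. Unpacking the definition of $\FA({\mathit Coll}_\omega,{\mathfrak L}^\omega)$, it asserts: for every infinite $X$ there is a filter $G$ on $\Coll(\omega,X)$ meeting every $L^X_i$, $i<\omega$. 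Since $\fin$ asserts that every infinite $X$ admits an injection $\omega\longrightarrow X$, it suffices to show, for each fixed infinite $X$ separately, that such a generic filter exists if and only if such an injection exists.

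For the direction $\fin \Rightarrow \FA({\mathit Coll}_\omega,{\mathfrak L}^\omega)$, I would fix an infinite $X$, let $g:\omega\longrightarrow X$ be an injection supplied by $\fin$, and take $G = \setof{g\restrict n}{n<\omega}$, the set of initial segments of $g$. A short verification shows $G$ is already a filter: it is downward directed because any two initial segments of $g$ are comparable under end-extension, and it is upward closed because any condition weaker than $g\restrict n$ is itself an initial segment of $g$. Since $g\restrict i \in G \cap L^X_i$, the filter $G$ is generic for ${\mathfrak L}^\omega(\Coll(\omega,X))$. (One could instead invoke Lemma \ref{injection_into_X_implies_density} to note the $L^X_i$ are dense, but exhibiting $G$ explicitly is cleaner and avoids appealing to density at all.)

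For the converse $\FA({\mathit Coll}_\omega,{\mathfrak L}^\omega) \Rightarrow \fin$, I would fix an infinite $X$, let $G$ be a filter meeting every $L^X_i$, and set $f = \bigcup G$, viewing each condition as a set of ordered pairs. The crux is to check $f$ is an injection $\omega\longrightarrow X$. Single-valuedness follows from downward directedness: any $p,q\in G$ have a common extension $r\in G$, so $p = r\restrict\dom(p)$ and $q = r\restrict\dom(q)$ agree wherever both are defined, whence $\bigcup G$ is a function. That $\dom(f)=\omega$ follows because $G\cap L^X_i\ne\emptyset$ yields conditions of arbitrarily large finite domain, and these domains form a nested family of initial segments of $\omega$ with unbounded, hence full, union. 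Injectivity is inherited from the conditions: if $f(m)=f(n)$, choose $p\in G$ with $\dom(p)>\max(m,n)$ and use that $p$ is injective to conclude $m=n$. This produces the required injection $\omega\longrightarrow X$, giving $\fin$.

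The argument is essentially bookkeeping, and I do not anticipate a genuine obstacle. The one place demanding care is the converse: one must confirm that the abstractly given filter actually assembles into a \emph{total}, \emph{single-valued}, \emph{injective} function — that is, that directedness delivers single-valuedness and that meeting every $L^X_i$ delivers totality — while keeping the order convention (shorter $=$ weaker) consistent throughout.
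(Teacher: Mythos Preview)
Your proposal is correct and follows essentially the same route as the paper: in both directions you take $G=\setof{g\restrict n}{n<\omega}$ from an injection $g$, and conversely set $f=\bigcup G$ from a generic filter. The only difference is that the paper, in the ``$\Longrightarrow$'' direction, first verifies that each $L^X_n$ is dense in $\Coll(\omega,X)$ before invoking the axiom, whereas you correctly observe that density is not needed for $\FA(\Pbb,{\mathcal D})$ as defined and instead spell out in more detail why $\bigcup G$ is a total injective function; these are expository rather than substantive differences.
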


   \begin{proof} ``$\Longrightarrow$''. Let $X$ be an infinite set. First of all, we argue that
     for each $n<\omega$ we have that $L^X_n\cap \Coll(\omega,X)$ is dense in $\Coll(\omega,X)$. Given
     $p\in \Coll(\omega,X)$ with $\dom(p) = m$, either $n\le m$, in which case
     $p\in L^X_n$, or $m < n$ and 
     we can choose a set $x \in [X\setminus \rge(p)]^{n-m}$ and extend $p$ to a condition $q$ with $\dom(q)=n$, and hence  $q\in L^X_n$,
     by  letting $q\on n\setminus m$ enumerate $x$.

     Furthermore, if $G$ is $\Coll(\omega,X)$-generic for $\tupof{L^X_n}{n<\omega}$ as given by invoking $\FA(\Coll(\omega,X),\tupof{L^X_n}{n<\omega})$ then
    $g=\bigcup G:\omega\longrightarrow X$ is injective. 

    ``$\Longleftarrow$''. Suppose $X$ is an infinite set. By $\fin$,
    let $g:\omega\longrightarrow X$ be an injection.
    Set $G = \setof{g\on n}{n\in \omega}$. Then $G$ is
    $\Coll(\omega,X)$-generic for $\tupof{L^X_n}{n<\omega}$.
   \end{proof}

  The following is also immediate by a very similar proof.
  
  \begin{proposition}\label{FA_coll_iff_DC_seq} If $\kappa$ is a regular cardinal, the following are equivalent
   $\FA({\mathit Coll_\kappa},{\mathfrak L}^\kappa)$, $\FA_{{\mathfrak L}^\kappa}({\mathit Coll_\kappa})$ and 
   $ \Ha_\kappa$.
    \end{proposition}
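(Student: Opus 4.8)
The plan is to close the cycle of implications
\[ \Ha_\kappa \Longrightarrow \FA_{{\mathfrak L}^\kappa}({\mathit Coll_\kappa}) \Longrightarrow \FA({\mathit Coll_\kappa},{\mathfrak L}^\kappa) \Longrightarrow \Ha_\kappa, \]
following the template of the preceding proposition but paying attention to the one place where the passage from $\omega$ to a general regular $\kappa$ genuinely changes the argument. Throughout I would use the reformulation of $\Ha_\kappa$ recorded in \S\ref{intro}: since under $\Ha_\kappa$ the hypothesis $X\not\prec\kappa$ forces $\kappa\preccurlyeq X$, the principle $\Ha_\kappa$ says exactly that for every set $X$ with $X\not\prec\kappa$ there is an injection $g:\kappa\longrightarrow X$.

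For $\Ha_\kappa \Rightarrow \FA_{{\mathfrak L}^\kappa}({\mathit Coll_\kappa})$, fix $X$ with $X\not\prec\kappa$ and let $g:\kappa\longrightarrow X$ be the injection supplied by $\Ha_\kappa$. Two things must be verified for $\FA_{{\mathfrak L}^\kappa(\Coll(\kappa,X))}(\Coll(\kappa,X))$: that each $L^X_i$ is dense and that a generic filter exists. Density is precisely the content of Lemma \ref{injection_into_X_implies_density}, whose hypothesis is exactly the existence of such a $g$. For the filter I would take $G=\setof{g\on i}{i<\kappa}$; this is a chain in $\Coll(\kappa,X)$ under extension, closed under weaker (shorter) conditions and directed, hence a filter, and $g\on i\in G\cap L^X_i$ for each $i$, so $G$ meets every member of ${\mathfrak L}^\kappa(\Coll(\kappa,X))$. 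The implication $\FA_{{\mathfrak L}^\kappa}({\mathit Coll_\kappa}) \Rightarrow \FA({\mathit Coll_\kappa},{\mathfrak L}^\kappa)$ is then immediate from the definitions, since $\FA_{\mathcal D}(\Pbb)$ is by definition $\FA(\Pbb,{\mathcal D})$ together with the assertion that $\mathcal D$ is a family of dense sets.

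Finally, for $\FA({\mathit Coll_\kappa},{\mathfrak L}^\kappa) \Rightarrow \Ha_\kappa$, let $X$ be arbitrary. If $X\prec\kappa$ there is nothing to prove, so assume $X\not\prec\kappa$, whence $\Coll(\kappa,X)\in{\mathit Coll_\kappa}$ and the forcing axiom yields a filter $G$ meeting each $L^X_i$. Set $g=\bigcup G$. Directedness of $G$ makes any two of its elements cohere as functions, so $g$ is an injective partial function into $X$; meeting $L^X_i$ for every $i<\kappa$ makes $\dom(g)$ cofinal in $\kappa$, and since each condition has domain an ordinal below $\kappa$ this gives $\dom(g)=\kappa$. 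Thus $g:\kappa\longrightarrow X$ is an injection, $\kappa\preccurlyeq X$, and $\Ha_\kappa$ holds. Note that this direction uses only the plain meeting axiom, never density.

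The one step that is not a verbatim copy of the $\omega$-case — and the place I expect the only real subtlety — is the density clause. For $\kappa=\omega$ one extends a finite condition $p$ to a prescribed finite length just by choosing finitely many new points of $X\setminus\rge(p)$, which is possible in ZF for any infinite $X$ with no appeal to choice. For infinite $i<\kappa$ this breaks down: lengthening $p$ to have domain at least $i$ may require adjoining infinitely many distinct elements, which cannot be done in ZF without an injection already in hand. This is exactly why density must be routed through Lemma \ref{injection_into_X_implies_density}, hence through $\Ha_\kappa$, and it explains why the plain axiom $\FA({\mathit Coll_\kappa},{\mathfrak L}^\kappa)$ — which asks only for a meeting filter and not for density — is the formulation that matches $\Ha_\kappa$ on the nose.
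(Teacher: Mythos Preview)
Your proof is correct and follows essentially the same route as the paper: the same cycle of implications, the same use of Lemma~\ref{injection_into_X_implies_density} for density, and the same construction $G=\setof{g\on i}{i<\kappa}$ and $g=\bigcup G$ for the two nontrivial directions. Your additional commentary on why the density argument for general $\kappa$ cannot mimic the finite-extension trick from the $\omega$ case is a helpful gloss, but it does not alter the structure of the argument.
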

  
\begin{proof} 
  Suppose $\FA({\mathit Coll_\kappa},{\mathfrak L}^\kappa)$ holds and $X\not\prec \kappa$.
  Let $G$ be $\Coll(\kappa,X)$-generic for $\tupof{L^X_i}{i<\kappa}$.
  Then $\bigcup G$ is as required for the instance of $\Ha_\kappa$. 
  Moreover, by Lemma (\ref{injection_into_X_implies_density}), $\bigcup G$ shows that
  for each $i<\kappa$ we have $L^X_i$ is dense and hence
  $\FA_{{\mathfrak L}^\kappa}({\mathit Coll_\kappa})$ holds.

  It is trivial that $\FA_{{\mathfrak L}^\kappa}({\mathit Coll_\kappa})$ implies $\FA({\mathit Coll_\kappa},{\mathfrak L}^\kappa)$. 
  
  Finally suppose $\Ha_\kappa$ holds and $X\not\prec \kappa$. If $g:\kappa\longrightarrow X$ is
  as given by $\Ha_\kappa$, then, by Lemma (\ref{injection_into_X_implies_density}), for each
  $i<\kappa$ we have $L^X_i$ is dense in $\Coll(\kappa,X)$, and
  $\setof{g\on i}{i<\kappa}$ is  $\Coll(\kappa,X)$-generic for $\tupof{L^X_i}{i<\kappa}$. Thus
   $\FA_{{\mathfrak L}^\kappa}({\mathit Coll_\kappa})$ and $\FA({\mathit Coll_\kappa},{\mathfrak L}^\kappa)$ both hold.
    \end{proof} 

\begin{remark}\label{rmk_more_on_defn_coll} As noted in Remark (\ref{rmk_on_defn_coll}) it is important that the elements of
  the $\Coll(\kappa,X)$ are injections. Simply having ${^{<\kappa}X}$-generic for $\tupof{L^X_i}{i<\kappa}$ would not
  necessarily even allow us to use the generic to build a non-constant function, let alone an embedding of $\kappa$ into $X$.
  \end{remark}

\begin{corollary} $\AC \Longleftrightarrow$ there is a proper class of regular $\kappa$
  $\FA({\mathit Coll_\kappa},{\mathfrak L}^\kappa)$.
\end{corollary}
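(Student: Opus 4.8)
The plan is to prove the corollary by combining Proposition~(\ref{FA_coll_iff_DC_seq}), which gives $\FA({\mathit Coll_\kappa},{\mathfrak L}^\kappa)\Longleftrightarrow \Ha_\kappa$ for each regular $\kappa$, with Proposition~(\ref{Hartogs_theorem}), which lists the equivalents of $\AC$ in terms of the $\Ha_\alpha$. The point is simply to translate the forcing-axiom statement into a statement about the $\Ha_\kappa$ and then quote the Hartogs equivalence.

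For the forward direction, I would assume $\AC$. By Proposition~(\ref{Hartogs_theorem}), clause~(ii), $\Ha_\alpha$ holds for \emph{every} ordinal $\alpha$, in particular for every regular cardinal $\kappa$; since there is a proper class of regular cardinals (e.g.\ every successor cardinal is regular, a ZF fact), Proposition~(\ref{FA_coll_iff_DC_seq}) converts each such $\Ha_\kappa$ into $\FA({\mathit Coll_\kappa},{\mathfrak L}^\kappa)$, yielding a proper class of regular $\kappa$ for which the forcing axiom holds.

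For the reverse direction, I would assume that $\FA({\mathit Coll_\kappa},{\mathfrak L}^\kappa)$ holds for a proper class of regular $\kappa$. By Proposition~(\ref{FA_coll_iff_DC_seq}), $\Ha_\kappa$ then holds for that same proper class of regular $\kappa$. This is precisely clause~(iii) of Proposition~(\ref{Hartogs_theorem}) (a proper class of ordinals $\alpha$ for which $\Ha_\alpha$ holds), so clause~(i), $\AC$, follows.

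The only genuine check here, and the step I would flag as needing a word of care rather than as a real obstacle, is ensuring the quantifier ``proper class of regular $\kappa$'' lines up cleanly between the two propositions: Proposition~(\ref{Hartogs_theorem})(iii) is stated for a proper class of ordinals, whereas the corollary restricts to regular cardinals. For the forward direction one must note that the regular cardinals already form a proper class in ZF, and for the reverse direction one uses that a proper class of regular $\kappa$ is in particular a proper class of ordinals, so clause~(iii) applies verbatim. No new combinatorics is required; the content is entirely carried by the two cited propositions.
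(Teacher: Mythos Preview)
Your proposal is correct and follows exactly the paper's approach: the paper's proof is the single line ``Immediate from the previous proposition and Proposition~(\ref{Hartogs_theorem}),'' and you have simply unpacked that line, including the harmless observation about matching the ``proper class of regular $\kappa$'' quantifier with clause~(iii) of Proposition~(\ref{Hartogs_theorem}).
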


\begin{proof} Immediate from the previous proposition and Proposition (\ref{Hartogs_theorem}).
  \end{proof}

\vskip6pt

\begin{corollary} $\AC \Longleftrightarrow \hbox{  there is a proper class of regular } \kappa \sss\sss
  \FA_{\kappa}(\kappa\hbox{-closed})$
  \end{corollary}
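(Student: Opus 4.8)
The plan is to prove each direction separately, routing the substantive direction through $\Ha_\kappa$ so as to reuse Proposition~(\ref{Hartogs_theorem}). For the implication from $\AC$ I would in fact establish that $\FA_\kappa(\kappa\text{-closed})$ holds for \emph{every} regular $\kappa$, which a fortiori gives a proper class of them. Fix a $\kappa$-closed $\Pbb$ and a family $\tupof{D_i}{i<\kappa}$ of dense sets. Using $\AC$ --- concretely, a well-ordering of $\Pbb$, so that all choices below are canonical --- build a descending sequence $\tupof{p_i}{i<\kappa}$ with $p_i\in D_i$: at successor stages extend $p_i$ into $D_{i+1}$ by density, and at a limit $\eta<\kappa$ first take a lower bound of $\tupof{p_j}{j<\eta}$, which exists because $\eta<\kappa$ and $\Pbb$ is $\kappa$-closed, and then extend it into $D_\eta$ by density. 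The filter generated by $\set{p_i : i<\kappa}$ meets every $D_i$, so $\FA(\Pbb,\tupof{D_i}{i<\kappa})$ holds.

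For the converse it suffices, by the third item of Proposition~(\ref{Hartogs_theorem}), to prove that for each regular $\kappa$ one has $\FA_\kappa(\kappa\text{-closed})\Longrightarrow\Ha_\kappa$; applied across the given proper class this yields a proper class of $\kappa$ for which $\Ha_\kappa$ holds, and hence $\AC$. So fix a regular $\kappa$ with $\FA_\kappa(\kappa\text{-closed})$ and a set $X$ with $X\not\prec\kappa$; I must inject $\kappa$ into $X$. If $h(X)>\kappa$ this is immediate, since then $\kappa<h(X)$ and so $\kappa\preccurlyeq X$ by the definition of the Hartogs number. Otherwise $h(X)\le\kappa$, and then $X$ cannot be well-orderable --- a well-orderable $X$ with $X\not\prec\kappa$ has $\card{X}\ge\kappa$ and hence $h(X)>\kappa$ --- so the task reduces to deriving a contradiction from a non-well-orderable $X$ with $\lambda := h(X)\le\kappa$.

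To do so I would apply $\FA_\kappa(\kappa\text{-closed})$ to the partial order $\Pbb$ consisting of all \emph{partial} injections from $\lambda$ into $X$, ordered by reverse inclusion, together with the sets $E_\xi = \setof{p\in\Pbb}{\xi\in\dom(p)}$ for $\xi<\lambda$, padded by the trivial dense set $\Pbb$ to a family of $\kappa$-many dense sets (using $\lambda\le\kappa$). Since the union of any chain of partial injections into $X$ with domains contained in $\lambda$ is again such a partial injection, $\Pbb$ is $\kappa$-closed. Each $E_\xi$ is dense precisely because $X$ is not well-orderable: the range of any condition is the injective image of a subset of $\lambda$, hence a \emph{well-orderable}, and so \emph{proper}, subset of $X$, leaving a point of $X$ to which $\xi$ may be sent. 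A filter $G$ meeting every $E_\xi$ then has $\bigcup G$ a total injection of $\lambda$ into $X$, contradicting $\lambda = h(X)$.

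The crux is the density of the $E_\xi$ and, with it, the choice of forcing. The naive route --- forcing with $\Coll(\kappa,X)$ and the sets $L^X_i$ --- is circular here: by Lemma~(\ref{injection_into_X_implies_density}) the $L^X_i$ are known to be dense only once an injection of $\kappa$ into $X$ is in hand, which is exactly what is to be proved, and absent it the $L^X_i$ may be non-dense or even empty. Passing to \emph{partial} injections of length bounded by $h(X)$ is what renders density automatic, since it appeals only to the non-well-orderability of $X$ rather than to any choice, while $\kappa$-closure is unaffected because chains of partial injections always admit unions. The remaining points --- that the dense family has size at most $\kappa$, and that $\bigcup G$ is genuinely injective because the elements of a filter are pairwise compatible --- are routine.
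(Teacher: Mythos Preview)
Your proof is correct. The forward direction is the standard transfinite-recursion argument (which the paper simply cites from \cite{Viale19}), and your converse goes through cleanly: the case split on $h(X)$, the $\kappa$-closure of the poset of partial injections, the density of the $E_\xi$ from non-well-orderability alone, and the extraction of a total injection from the generic are all sound.

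Your route for ``$\Longleftarrow$'' genuinely differs from the paper's. The paper declares this direction ``immediate from the previous corollary,'' the intended reading being that $\FA_\kappa(\kappa\hbox{-closed})$ implies $\FA({\mathit Coll_\kappa},{\mathfrak L}^\kappa)$ because each $\Coll(\kappa,X)$ is $\kappa$-closed. You instead prove $\FA_\kappa(\kappa\hbox{-closed})\Rightarrow\Ha_\kappa$ directly, via the Hartogs-number case split and the poset of \emph{partial} injections from $h(X)$. Your remark about the naive route is well taken: $\FA_\kappa(\kappa\hbox{-closed})$ only supplies generics for \emph{dense} families, and in ZF alone the $L^X_i$ need not be dense in $\Coll(\kappa,X)$ --- for amorphous $X$ and $\kappa=\omega_1$, for instance, $L^X_\omega=\emptyset$. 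Your poset sidesteps this because density of the $E_\xi$ follows purely from the non-well-orderability of $X$, while $\kappa$-closure is free since arbitrary chains of partial injections union to partial injections. So your argument is more self-contained and addresses a subtlety that the paper's one-line appeal passes over.
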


\begin{proof} ``$\Longleftarrow$'' is immediate from the previous corollary. ``$\Longrightarrow$'' appears with a proof
  in \cite{Viale19}, Theorem (1.8), where it is seemingly attributed
  as an unpublished result due to Todor\v{c}evi\'c.\footnote{``The
  following formulation of the axiom of choice in terms of forcing
  axioms \emph{has been handed to me} by Todor\v{c}evi\'c, I’m not aware of any
  published reference.'' (\cite{Viale19}, p.7.). Our emphasis. \label{fn_Mateo_Stevo}}
  \end{proof}

\vskip12pt
\begin{definition} Let $\kappa$ be a regular cardinal and $X$ a set such that $X\not\prec \kappa$.
  Let 
  \[ \Qbb^X_\kappa = \setof{ t \in {^{<\kappa}([X]^{<\kappa})} }{
    \forall i\le \lh(t)\sss\sss \card{t(i) \setminus \bigcup_{j<i}t(j)} = 1 } ,\]
ordered by extension.

    Let $L^{X,\kappa,*}_i= \setof{t\in \Qbb^X_\kappa}{i\le \dom(t)}$ and
    $L^{X,\kappa,*} = \tupof{L^{X,\kappa,*}_i}{i<\kappa}$.
    Let $\Qbb_\kappa = \setof{\Qbb^X_\kappa}{X\not \prec \kappa}$. Let ${\mathfrak L}^{\kappa,*}$ be the Function
    such that if $X$ a set with $X\not\prec \kappa$ then ${\mathfrak L}^{\kappa,*}(X)=L^{X,\kappa,*}$.
  \end{definition}
\vskip6pt

\begin{lemma}\label{equiv_Coll_Qbb} Let $\kappa$ be a regular cardinal and $X$ a set such that $X\not\prec \kappa$.
  There is a definable isomorphism (of partial orders) between $\Coll(\kappa,X)$ and $\Qbb^X_\kappa$.
\end{lemma}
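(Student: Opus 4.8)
The plan is to write down an explicit, definable pair of mutually inverse order-preserving maps, both posets being ordered by extension. The guiding idea is that a member of $\Coll(\kappa,X)$ records an injective enumeration, while a member of $\Qbb^X_\kappa$ records, stage by stage, the \emph{set} of values enumerated so far. Concretely, I would define $\Phi:\Qbb^X_\kappa\longrightarrow\Coll(\kappa,X)$ by sending $t$ to the function $f$ with $\dom(f)=\lh(t)$ and, for each $i<\lh(t)$, letting $f(i)$ be the unique element of $t(i)\setminus\bigcup_{j<i}t(j)$; this is well defined exactly because of the defining clause $\card{t(i)\setminus\bigcup_{j<i}t(j)}=1$. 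In the other direction I would define $\Psi:\Coll(\kappa,X)\longrightarrow\Qbb^X_\kappa$ by sending an injection $f$ to the sequence $t$ with $\lh(t)=\dom(f)$ and $t(i)=\rge(f\restrict(i+1))=\setof{f(j)}{j\le i}$.

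The routine part is then to check four things. First, $\Psi(f)\in\Qbb^X_\kappa$: each $t(i)$ is a subset of $X$ of cardinality $\card{i+1}<\kappa$ (here one uses only that $\kappa$ is a limit ordinal, so $i+1<\kappa$), and $t(i)\setminus\bigcup_{j<i}t(j)=\set{f(i)}$ since $f$ is injective, so the size-one clause holds. Second, $\Phi(t)\in\Coll(\kappa,X)$, i.e.\ $\Phi(t)$ is injective: if $j<i$ then $f(j)\in t(j)\subseteq\bigcup_{j'<i}t(j')$, whereas $f(i)\notin\bigcup_{j'<i}t(j')$ by construction, so $f(i)\ne f(j)$; and $\dom(\Phi(t))=\lh(t)<\kappa$. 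Third, both maps carry extension to extension: since every defining formula at coordinate $i$ depends only on the restriction to $i+1$, if $t'$ end-extends $t$ then $\Phi(t')$ end-extends $\Phi(t)$, and symmetrically for $\Psi$, so $\le$ is both preserved and reflected. Fourth, definability is immediate because no choices are made: both maps are given uniformly in $X$ (and $\kappa$) by the displayed formulas.

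The main obstacle is establishing that $\Phi$ and $\Psi$ are mutually inverse, and in particular the composite $\Psi\circ\Phi$. Computing $\Phi\circ\Psi=\id$ is a direct unravelling: the unique new element of $\rge(f\restrict(i+1))$ over $\rge(f\restrict i)$ is $f(i)$. The reverse composite is where the real content of the defining clause must be extracted, since it requires showing that for $t\in\Qbb^X_\kappa$ one has $t(i)=\setof{f(j)}{j\le i}$ with $f=\Phi(t)$; that is, the size-one condition must force each $t(i)$ to be exactly the cumulative range of the enumeration it induces. I would prove this by induction on $i<\lh(t)$, showing simultaneously that $\bigcup_{j\le i}t(j)=\setof{f(j)}{j\le i}$ and that $t(i)$ coincides with this cumulative set. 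This inductive identification of the $\subseteq$-increasing, cumulative shape of the members of $\Qbb^X_\kappa$ is the crux; once it is in hand, mutual inversion, and hence the claimed definable isomorphism, follow at once.
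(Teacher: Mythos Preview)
Your maps $\Phi$ and $\Psi$ are exactly the maps the paper writes down, so the approach is the same. However, there is a genuine gap in your verification of $\Psi\circ\Phi=\id$. The size-one clause in the definition of $\Qbb^X_\kappa$ does \emph{not} force $t(i)$ to equal the cumulative set $\setof{f(j)}{j\le i}$: from $t(i)\setminus\bigcup_{j<i}t(j)=\set{f(i)}$ you get $f(i)\in t(i)$ and $t(i)\subseteq\bigcup_{j<i}t(j)\cup\set{f(i)}$, but nothing forces $\bigcup_{j<i}t(j)\subseteq t(i)$. Concretely, for distinct $a,b\in X$ take $t$ of length $2$ with $t(0)=\set{a}$ and $t(1)=\set{b}$; then $t\in\Qbb^X_\kappa$ (since $t(1)\setminus t(0)=\set{b}$ has one element), $\Phi(t)=(a,b)$, but $\Psi(\Phi(t))(1)=\set{a,b}\ne\set{b}=t(1)$. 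Thus $\Phi$ is not injective, and these maps cannot witness an isomorphism with the definition exactly as written.

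Your induction does correctly establish the first of your two simultaneous claims, namely $\bigcup_{j\le i}t(j)=\setof{f(j)}{j\le i}$; it is the second claim, that $t(i)$ itself coincides with this union, that fails. The repair is not in the argument but in the definition: if one also requires the sequence $t$ to be $\subseteq$-increasing (equivalently $\bigcup_{j<i}t(j)\subseteq t(i)$ for each $i<\lh(t)$), then your inductive step goes through verbatim and $\Phi$, $\Psi$ are mutually inverse. This monotonicity is in any case what is needed later for $\Qbb^X_\omega\in\Lambda$ (elements of $\Lambda$ are trees of \emph{strictly monotone} sequences from a lattice), so it is almost certainly the intended reading of $\Qbb^X_\kappa$; you should flag the missing hypothesis rather than try to push the induction through without it.
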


\begin{proof}     Given $f\in \Coll(\kappa,X)$ with $\dom(f)=\alpha$
  define $t^f = \tupof{\rge(f\on i+1)}{i\in \alpha}$ if $\alpha=\beta+1$
  If $t\in\Qbb^X_\kappa$ define $f^t \in {{^{<\kappa}}X}$ by
  $f^t(i)$ is the unique element of $t(i)\setminus \bigcup_{j<i} t(j)$ for $i<\lh(t)$.
\end{proof}

\vskip12pt

\begin{corollary}\label{coroll_1_to_equiv_Coll_Qbb} If $\kappa$ is a regular cardinal,
  $\FA(\Qbb_\kappa,{\mathfrak L^{\kappa,*}})\Longleftrightarrow
  \FA({\mathit Coll_\kappa},{\mathfrak L^\kappa})$.
  \end{corollary}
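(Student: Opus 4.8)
The plan is to leverage the definable isomorphism $\Phi\colon \Coll(\kappa,X)\longrightarrow \Qbb^X_\kappa$ supplied by Lemma (\ref{equiv_Coll_Qbb}) and to observe that a forcing axiom of the form $\FA(\Pbb,\mathcal D)$ is invariant under isomorphism of the underlying poset, provided the distinguished family $\mathcal D$ is carried across correctly. Since $\FA$ asserts nothing more than the existence of a generic filter, and an order isomorphism carries filters to filters and generics to generics, the whole equivalence reduces to checking that $\Phi$ matches the $i$-th distinguished set on each side, index by index.

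First I would fix a set $X$ with $X\not\prec\kappa$ and write $\Phi=(f\mapsto t^f)$ for the isomorphism of Lemma (\ref{equiv_Coll_Qbb}), with inverse $t\mapsto f^t$. The crucial observation is that $\Phi$ preserves length: if $\dom(f)=\alpha$ then $\lh(t^f)=\alpha$ by the definition of $t^f$, and symmetrically $\lh(f^t)=\lh(t)$. Consequently, for each $i<\kappa$,
\[ f\in L^X_i \iff i\le \dom(f)=\lh(t^f) \iff t^f\in L^{X,\kappa,*}_i, \]
so $\Phi$ restricts to a bijection of $L^X_i$ onto $L^{X,\kappa,*}_i$. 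In other words $\Phi$ sends the family ${\mathfrak L}^\kappa(\Coll(\kappa,X))=\tupof{L^X_i}{i<\kappa}$ to ${\mathfrak L}^{\kappa,*}(X)=\tupof{L^{X,\kappa,*}_i}{i<\kappa}$, respecting the indexing.

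Next I would record the routine fact that, because $\Phi$ is an order isomorphism, $G$ is a filter on $\Coll(\kappa,X)$ iff $\Phi[G]$ is a filter on $\Qbb^X_\kappa$, and, using the preceding paragraph, $G\cap L^X_i\ne\emptyset$ iff $\Phi[G]\cap L^{X,\kappa,*}_i\ne\emptyset$. Hence $G$ witnesses $\FA(\Coll(\kappa,X),{\mathfrak L}^\kappa(\Coll(\kappa,X)))$ precisely when $\Phi[G]$ witnesses $\FA(\Qbb^X_\kappa,{\mathfrak L}^{\kappa,*}(X))$, so these two statements are equivalent for this particular $X$. Finally I would quantify over $X$: by the definitions of the classes ${\mathit Coll}_\kappa$, $\Qbb_\kappa$ and of the Functions ${\mathfrak L}^\kappa$, ${\mathfrak L}^{\kappa,*}$, the axiom $\FA({\mathit Coll}_\kappa,{\mathfrak L}^\kappa)$ unwinds to the assertion that $\FA(\Coll(\kappa,X),{\mathfrak L}^\kappa(\Coll(\kappa,X)))$ holds for every $X\not\prec\kappa$, and $\FA(\Qbb_\kappa,{\mathfrak L}^{\kappa,*})$ unwinds to the corresponding assertion over the $\Qbb^X_\kappa$; since these match $X$ by $X$, the two axioms are equivalent.

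I do not expect a genuine obstacle here; the only point demanding care is the bookkeeping in the first step. Because the forcing axioms in play are stated for specific $\kappa$-indexed families (via the Functions ${\mathfrak L}^\kappa$ and ${\mathfrak L}^{\kappa,*}$), the correspondence must respect the indexing and not merely identify the two families as unindexed collections — and it is precisely the length-preservation of $\Phi$ that delivers the index-by-index match $L^X_i\mapsto L^{X,\kappa,*}_i$.
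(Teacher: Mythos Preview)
Your proposal is correct and is precisely the approach the paper intends: the corollary is stated without proof as an immediate consequence of the definable isomorphism in Lemma~(\ref{equiv_Coll_Qbb}), and the only content is exactly the length-preservation you isolate, which matches $L^X_i$ with $L^{X,\kappa,*}_i$ and hence transports generic filters in both directions. You have simply spelled out in detail what the paper leaves implicit.
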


\begin{corollary}\label{coroll_2_to_equiv_Coll_Qbb} If $\kappa$ is a regular cardinal then
  $\Ha_\kappa \Longleftrightarrow \FA(\Qbb_\kappa,{\mathfrak L^{\kappa,*}})$.
  \end{corollary}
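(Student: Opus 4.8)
The plan is to derive the equivalence by simply composing the two results that immediately precede it, since between them they already establish both halves of a chain passing through the intermediate forcing axiom $\FA({\mathit Coll_\kappa},{\mathfrak L}^\kappa)$. Proposition (\ref{FA_coll_iff_DC_seq}) supplies, for regular $\kappa$, the equivalence $\Ha_\kappa \Longleftrightarrow \FA({\mathit Coll_\kappa},{\mathfrak L}^\kappa)$, and Corollary (\ref{coroll_1_to_equiv_Coll_Qbb}) supplies $\FA({\mathit Coll_\kappa},{\mathfrak L}^\kappa) \Longleftrightarrow \FA(\Qbb_\kappa,{\mathfrak L}^{\kappa,*})$. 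Concatenating the two biconditionals yields $\Ha_\kappa \Longleftrightarrow \FA(\Qbb_\kappa,{\mathfrak L}^{\kappa,*})$, which is exactly the assertion. So in the write-up I would give a one-line citation of these two statements.

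For completeness I would indicate where the substantive work actually sits, namely in Corollary (\ref{coroll_1_to_equiv_Coll_Qbb}) and the isomorphism behind it. The key object is the definable order-isomorphism $f \mapsto t^f$ of Lemma (\ref{equiv_Coll_Qbb}) between $\Coll(\kappa,X)$ and $\Qbb^X_\kappa$. The point to verify is that this isomorphism preserves the domain of a condition, since $t^f$ has the same domain as $f$, so that it carries $L^X_i$ onto $L^{X,\kappa,*}_i$ for every $i<\kappa$, matching the Function ${\mathfrak L}^\kappa$ with ${\mathfrak L}^{\kappa,*}$ uniformly in $X$. Because an isomorphism of partial orders transports dense subsets to dense subsets and generic filters to generic filters, $\FA(\Coll(\kappa,X),\tupof{L^X_i}{i<\kappa})$ holds if and only if $\FA(\Qbb^X_\kappa,\tupof{L^{X,\kappa,*}_i}{i<\kappa})$ does, for each $X\not\prec\kappa$; quantifying over all such $X$ gives the corollary.

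There is no genuine obstacle here: the statement is a formal consequence of two already-proved results, and I expect the whole proof to reduce to their citation. The only place warranting a moment's care, and it is really a matter of bookkeeping rather than difficulty, is checking that the isomorphism of Lemma (\ref{equiv_Coll_Qbb}) respects the \emph{indexed families} of dense sets, so that the particular Functions ${\mathfrak L}^\kappa$ and ${\mathfrak L}^{\kappa,*}$, and not merely the ambient classes of forcings, are carried to one another. Once that observation is in place, both implications are automatic.
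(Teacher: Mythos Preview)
Your proposal is correct and matches the paper's approach: the corollary is stated without proof precisely because it is an immediate concatenation of Proposition~(\ref{FA_coll_iff_DC_seq}) and Corollary~(\ref{coroll_1_to_equiv_Coll_Qbb}), exactly as you describe. Your additional remark that the isomorphism of Lemma~(\ref{equiv_Coll_Qbb}) preserves domains and hence carries $L^X_i$ to $L^{X,\kappa,*}_i$ is the right observation underpinning Corollary~(\ref{coroll_1_to_equiv_Coll_Qbb}), and nothing further is needed.
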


We next show how this result allows us to improve on Theorem (\ref{Diego325}).
\vskip6pt
  
  \begin{proposition}\label{prop_FA_Lambda_implies_FA_Coll} $\FA_\omega(\Lambda) \Longrightarrow \FA({\mathit Coll_\omega},{\mathfrak L}^\omega)$.
  \end{proposition}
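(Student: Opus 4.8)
The plan is to route everything through the poset $\Qbb_\omega$ and the isomorphism of Corollary (\ref{coroll_1_to_equiv_Coll_Qbb}). Since that corollary gives $\FA(\Qbb_\omega,{\mathfrak L}^{\omega,*})\Longleftrightarrow\FA({\mathit Coll}_\omega,{\mathfrak L}^\omega)$, it suffices to deduce $\FA(\Qbb_\omega,{\mathfrak L}^{\omega,*})$ from $\FA_\omega(\Lambda)$. So I would fix a set $X$ with $X\not\prec\omega$ (i.e.\ $X$ infinite) and try to produce a filter on $\Qbb^X_\omega$ meeting every $L^{X,\omega,*}_i$. The crucial observation I want to make is that $\Qbb^X_\omega$ is itself a member of $\Lambda$; once that is in hand, one simply feeds the canonical countable family ${\mathfrak L}^{\omega,*}(X)$ into $\FA_\omega(\Lambda)$.

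To witness $\Qbb^X_\omega\in\Lambda$ I would order $[X]^{<\omega}$ by reverse inclusion, setting ${\mathcal L}_X=([X]^{<\omega},\supsetneq)$ (so $A<_{{\mathcal L}_X}B$ means $A\supsetneq B$), and check the three requirements. Strictness is immediate; ${\mathcal L}_X$ is a lattice because it is the order-reversal of the lattice $([X]^{<\omega},\subseteq)$, its meets and joins being unions and intersections of finite sets. The predecessors of a finite set $A$ under $<_{{\mathcal L}_X}$ are exactly its proper subsets, of which there are finitely many, so ${\mathcal L}_X$ has the fpp; and since $X$ is infinite every finite $A\subseteq X$ has a proper superset, so ${\mathcal L}_X$ has no minimal elements. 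A finite strictly decreasing sequence from ${\mathcal L}_X$ is precisely a finite strictly $\subseteq$-increasing sequence of finite subsets of $X$, and, reading $\Qbb^X_\omega$ through Lemma (\ref{equiv_Coll_Qbb}), a condition $t\in\Qbb^X_\omega$ is exactly such a sequence in which $t(0)$ is a singleton and each successive entry adjoins a single new point. As the set of these sequences is closed under initial segments and ordered by extension, $\Qbb^X_\omega$ is a tree all of whose elements are finite strictly decreasing sequences from ${\mathcal L}_X$; hence $\Qbb^X_\omega\in\Lambda$.

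It then remains to see that each $L^{X,\omega,*}_i=\setof{t\in\Qbb^X_\omega}{i\le\dom(t)}$ is dense in $\Qbb^X_\omega$. Given $t$ with $\dom(t)=m<i$, the set $\bigcup_{j<m}t(j)$ is finite, so, $X$ being infinite, it omits some point of $X$, and $t$ can be extended by one step; iterating reaches length $i$. Thus ${\mathfrak L}^{\omega,*}(X)=\tupof{L^{X,\omega,*}_i}{i<\omega}$ is a countable family of dense subsets of the forcing $\Qbb^X_\omega\in\Lambda$, and $\FA_\omega(\Lambda)$ supplies a filter meeting every $L^{X,\omega,*}_i$, that is, $\FA(\Qbb^X_\omega,{\mathfrak L}^{\omega,*}(X))$ holds. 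Since $X\not\prec\omega$ was arbitrary this yields $\FA(\Qbb_\omega,{\mathfrak L}^{\omega,*})$, and Corollary (\ref{coroll_1_to_equiv_Coll_Qbb}) delivers $\FA({\mathit Coll}_\omega,{\mathfrak L}^\omega)$.

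The step I expect to carry the weight is the verification that $\Qbb^X_\omega$ genuinely is a $\Lambda$-tree: that its conditions really are decreasing sequences from a \emph{lattice} enjoying the fpp and having no minimal elements, the lattice axiom being the most delicate to pin down. Conceptually this is exactly where the improvement over Theorem (\ref{Diego325}) resides. By building the enumeration into the forcing --- each new entry of a condition introduces a single fresh element of $X$ --- a generic filter codes an injection of $\omega$ into $X$ directly through its union, so there is no residual family of finite ``jumps'' to be simultaneously ordered; consequently no separate choice principle such as $\FA_\omega(\Gamma)$ (equivalently $\AC_{\omega}(\fin)$) is needed, and the only use of the infinitude of $X$ is the existence of one new point at each stage, which is precisely what density of the $L^{X,\omega,*}_i$ requires.
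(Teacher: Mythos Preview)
Your proposal is correct and follows essentially the same route as the paper: the paper simply asserts ``clearly $\Qbb^X_\omega\in\Lambda$'' and then invokes Lemma~(\ref{equiv_Coll_Qbb}) (equivalently Corollary~(\ref{coroll_1_to_equiv_Coll_Qbb})), whereas you spell out the verification that $([X]^{<\omega},\supsetneq)$ is a strict lattice with the fpp and no minimal elements and that the $L^{X,\omega,*}_i$ are dense. The added detail is accurate and matches the intended argument.
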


  \begin{proof} Let $X$ be an infinite set. 
    Clearly $\Qbb^X_\omega\in \Lambda$, and so
    $\FA_\omega(\Lambda) \Longrightarrow \FA(\Qbb_\omega,{\mathfrak L}^{\omega,*})$.
    
    Consequently, by Lemma \ref{equiv_Coll_Qbb},
    $\FA_\omega(\Lambda) \Longrightarrow  \FA({\mathit Coll}_\omega,{\mathfrak L}^\omega)$.    
\end{proof}  
  \vskip6pt
  
  \begin{corollary} $\FA_\omega(\Lambda) \Longrightarrow \fin$.
    \end{corollary}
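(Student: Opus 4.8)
The plan is simply to compose the two results that immediately precede this corollary. By Proposition (\ref{prop_FA_Lambda_implies_FA_Coll}), $\FA_\omega(\Lambda)$ implies $\FA({\mathit Coll_\omega},{\mathfrak L}^\omega)$. By the first proposition of this section (equivalently, by Proposition (\ref{FA_coll_iff_DC_seq}) instantiated at $\kappa=\omega$, recalling from the Remark after the definition of $\Ha_\alpha$ that $\Ha_\omega$ is exactly $\fin$), we have $\FA({\mathit Coll_\omega},{\mathfrak L}^\omega)\Longleftrightarrow\fin$. Chaining the implication with the forward direction of this equivalence yields $\FA_\omega(\Lambda)\Longrightarrow\fin$, which is the desired conclusion.

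Concretely, I would write: assume $\FA_\omega(\Lambda)$; apply Proposition (\ref{prop_FA_Lambda_implies_FA_Coll}) to obtain $\FA({\mathit Coll_\omega},{\mathfrak L}^\omega)$; then apply the ``$\Longrightarrow$'' half of the proposition identifying $\FA({\mathit Coll_\omega},{\mathfrak L}^\omega)$ with $\fin$ to conclude that every infinite set is Dedekind-infinite. There is essentially no obstacle here: all the substantive work has already been carried out in establishing the two cited statements. The one point worth flagging for the reader is \emph{why} this is a genuine improvement on Theorem (\ref{Diego325}), which required the extra hypothesis $\FA_\omega(\Gamma)$. The gain comes entirely from the reformulation via the $\Coll(\omega,X)$ (equivalently $\Qbb^X_\omega$) forcings: in the proof of $\FA({\mathit Coll_\omega},{\mathfrak L}^\omega)\Longleftrightarrow\fin$, the density of each $L^X_n$ in $\Coll(\omega,X)$ was verified using only that $X$ is infinite, so that any condition of domain $m<n$ can be extended by enumerating some $x\in[X\setminus\rge(p)]^{n-m}$. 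This choice of finitely many fresh elements needs no appeal to $\AC_\omega(\fin)$ or $\FA_\omega(\Gamma)$, which is precisely the hypothesis that drops out.

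If one prefers a self-contained argument rather than a bare citation, I would instead unwind Proposition (\ref{prop_FA_Lambda_implies_FA_Coll}) inline: given an infinite $X$, observe $\Qbb^X_\omega\in\Lambda$, so $\FA_\omega(\Lambda)$ supplies a $\Qbb^X_\omega$-generic filter meeting the dense sets $\tupof{L^{X,\omega,*}_n}{n<\omega}$; transport this across the definable isomorphism of Lemma (\ref{equiv_Coll_Qbb}) to a $\Coll(\omega,X)$-generic filter $G$ for $\tupof{L^X_n}{n<\omega}$; and take $g=\bigcup G$, which is an injection $\omega\longrightarrow X$ witnessing that $X$ is Dedekind-infinite. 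Either presentation is routine; I would favour the short composition of the two prior propositions, since it makes the logical structure transparent.
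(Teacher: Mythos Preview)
Your proposal is correct and matches the paper's approach: the corollary is stated without proof in the paper, as it follows immediately from composing Proposition~(\ref{prop_FA_Lambda_implies_FA_Coll}) with the earlier equivalence $\FA({\mathit Coll_\omega},{\mathfrak L}^\omega)\Longleftrightarrow\fin$. Your short composition of the two prior propositions is exactly what is intended.
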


  \vskip6pt
    \begin{corollary} $\FA_\omega(\Lambda) \Longrightarrow \FA_\omega(\Gamma)$.
    \end{corollary}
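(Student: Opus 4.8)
The plan is to chain the immediately preceding corollary with the two characterisations of $\AC_\omega(\fin)$ recalled in Section 3. The essential point is that $\fin$ already decomposes as a conjunction one of whose conjuncts is $\AC_\omega(\fin)$, and that $\AC_\omega(\fin)$ is exactly the combinatorial content captured by the forcing axiom $\FA_\omega(\Gamma)$ via Shannon's result. So once we know $\FA_\omega(\Lambda)$ delivers $\fin$, there is nothing left to force directly: we simply read off the weaker principle and re-encode it as a forcing axiom for $\Gamma$.

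In more detail, I would proceed in three steps. First, invoke the previous corollary to obtain $\FA_\omega(\Lambda) \Longrightarrow \fin$. Second, use Gomes da Silva's proposition $\fin \Longleftrightarrow \DC_{l,fp} + \AC_\omega(\fin)$; since the right-hand side is a conjunction, the forward direction of this biconditional gives in particular $\fin \Longrightarrow \AC_\omega(\fin)$. Third, apply Shannon's proposition $\AC_\omega(\fin) \Longleftrightarrow \FA_\omega(\Gamma)$ in the direction $\AC_\omega(\fin) \Longrightarrow \FA_\omega(\Gamma)$. Composing the three implications yields $\FA_\omega(\Lambda) \Longrightarrow \FA_\omega(\Gamma)$, as desired.

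There is no real obstacle here: the content of the corollary is entirely carried by the earlier results, and the argument is a bookkeeping exercise in the directions of the implications. The only things to check are that Gomes da Silva's biconditional is genuinely being used to extract the single conjunct $\AC_\omega(\fin)$ (rather than the full strength of $\fin$), and that Shannon's equivalence is invoked in the correct direction. Both are immediate. One could alternatively route through Herrlich's equivalence $\AC_\omega(\fin) \Longleftrightarrow \CUT(\fin)$, but this adds nothing; the direct passage through Shannon's forcing-axiom characterisation is the shortest.
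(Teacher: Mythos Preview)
Your proof is correct and matches the paper's implicit argument: the corollary is stated without proof immediately after $\FA_\omega(\Lambda) \Longrightarrow \fin$, and the intended chain is exactly the one you give, passing through $\AC_\omega(\fin)$ and Shannon's equivalence. The only minor remark is that invoking Gomes da Silva's full biconditional to extract $\fin \Longrightarrow \AC_\omega(\fin)$ is slightly heavier than needed---this implication is elementary and well known independently---but since that result is among those recalled in Section~3, your route is entirely in keeping with the paper's presentation.
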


    \vskip6pt
    
    Finally, for regular cardinals $\kappa$ we give a characterization of $\DC_\kappa$ in terms of a forcing axiom analogous to
    the characterization of $\Ha_\kappa$ by $\FA(\Coll_\kappa,{\mathfrak L}^\kappa)$ given in Proposition (\ref{FA_coll_iff_DC_seq}).
    We then use this to show $\FA_\omega(\Lambda) \Longleftrightarrow \DC$.
    \vskip6pt
    
%    \begin{definition} Let $\kappa$ be an initial ordinal. If $X$ is a set with $X\not\prec \kappa$ 
%      let ${\mathcal T}^X_{\kappa}$ be the collection of $\kappa$-closed sub partial orders of $\Coll(\kappa,X)$ with no maximal branches.
%      Let ${\mathcal T}_\kappa$ be the class of all such partial orders. 
%      \end{definition}
    
    \vskip6pt
    \begin{definition}    Let $\kappa$ be a initial ordinal.

      For $X\not \prec \kappa$ let ${\mathcal F}^X_\kappa$ be the collection of
  functions  $F:(X)^{<\kappa}\longrightarrow {\mathcal P}(X)\setminus \set{\emptyset}$
  such that for all $t\in (X)^{<\kappa}$ and all $x\in F(t)$ we have $t\concat x \in (X)^{<\kappa}$.
  For $F\in {\mathcal F}^X_\kappa$ define $\Tbb(F) = \setof{t\in \Coll(\kappa,X)}{\forall i<\lh(t)\sss\sss t(i) \in F(t\on i)}$ with partial order inherited from
  $\Coll(\kappa,X)$.  Let ${\mathcal T}^X_\kappa = \setof{\Tbb(F)}{F\in {\mathcal F}^X_\kappa}$. 
 
   Let ${\mathcal T}_\kappa = \bigcup \setof{{\mathcal T}^X_\kappa }{X\not\prec \kappa}$.
    \end{definition}
    
    \vskip6pt
    \begin{definition} Let $\kappa$ be a initial ordinal.
      Let $\DC^*_{\kappa}$ be the assertion that for any $X$ with $X\not \prec \kappa$ and any function
      $F:(X)^{<\kappa}\longrightarrow {\mathcal P}(X)\setminus \set{\emptyset}$
      such that for all $t\in (X)^{<\kappa}$ and all $x\in F(t)$ we have $t\concat x \in (X)^{<\kappa}$
      there is some
      $g:\kappa\longrightarrow X$ such that for all $i<\kappa$ we have
      $g(i) \in F(g\on i)$.
    \end{definition}    

        \begin{remark} Let $\kappa$ be a cardinal. $\DC^*_{\kappa}$ is clearly equivalent to
      the assertion that for any $X$ with $X\not \prec \kappa$ and any function
      $F:{^{<\kappa}X}\longrightarrow {\mathcal P}(X)\setminus \set{\emptyset}$
      such that for all $t\in {^{<\kappa}X}$ we have $F(t)\subseteq F^X_{seq}(t)$
      there is some $g:\kappa\longrightarrow X$ such that for all $i<\kappa$ we have
      $g(i) \in F(g\on i)$.
        \end{remark}
        
    \vskip6pt
    \begin{proposition} Let $\kappa$ be a regular initial ordinal. Then $\DC_\kappa  \Longleftrightarrow
      \FA_{ {\mathfrak L}^\kappa}({\mathcal T}_\kappa)      \Longleftrightarrow
     \FA({\mathcal T}_\kappa, {\mathfrak L}^\kappa)
\Longleftrightarrow \DC^{*}_\kappa$.
    \end{proposition}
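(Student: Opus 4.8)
The plan is to prove the cycle of implications
\[\DC_\kappa \Longrightarrow \FA_{{\mathfrak L}^\kappa}({\mathcal T}_\kappa) \Longrightarrow \FA({\mathcal T}_\kappa,{\mathfrak L}^\kappa) \Longrightarrow \DC^{*}_\kappa \Longrightarrow \DC_\kappa,\]
interpreting ${\mathfrak L}^\kappa(\Tbb(F))$ as the tuple of sets $L^X_i\cap\Tbb(F)=\setof{t\in\Tbb(F)}{i\le\lh(t)}$, as the uniform definition of ${\mathfrak L}^\kappa$ dictates. Two of the four implications are essentially formal. The implication $\FA_{{\mathfrak L}^\kappa}({\mathcal T}_\kappa)\Rightarrow\FA({\mathcal T}_\kappa,{\mathfrak L}^\kappa)$ is immediate, since the subscripted axiom asserts everything the unsubscripted one does and, in addition, that the relevant sets are dense. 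For $\FA({\mathcal T}_\kappa,{\mathfrak L}^\kappa)\Rightarrow\DC^{*}_\kappa$, I would fix $X\not\prec\kappa$ and $F\in{\mathcal F}^X_\kappa$; the poset $\Tbb(F)$ lies in ${\mathcal T}_\kappa$, so there is a filter $G$ meeting each $L^X_i\cap\Tbb(F)$. As $\Tbb(F)$ is a set of injective sequences ordered by extension, any two members of $G$ are comparable, so $g:=\bigcup G$ is a single injective function; meeting every $L^X_i$ forces $\dom(g)=\kappa$, and each initial segment of $g$ lying in $\Tbb(F)$ gives $g(i)\in F(g\restrict i)$ for all $i<\kappa$. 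This is exactly the conclusion of $\DC^{*}_\kappa$.

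For $\DC_\kappa\Rightarrow\FA_{{\mathfrak L}^\kappa}({\mathcal T}_\kappa)$ the plan is to show that, assuming $\DC_\kappa$, any condition of $\Tbb(F)$ extends to a full injective $\kappa$-branch following $F$. Given $t\in\Tbb(F)$, I would relativise $F$ above $t$ over $X'=X\setminus\rge(t)$ (still $\not\prec\kappa$, since deleting a well-orderable set of size $<\kappa$ cannot bring $X$ below $\kappa$), setting $\hat F(s)=F(t\concat s)$ when $t\concat s$ is injective and $\hat F(s)=X$ otherwise, a total map ${}^{<\kappa}X\to{\mathcal P}(X)\setminus\set{\emptyset}$. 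Feeding $\hat F$ to $\DC_\kappa$ and checking by induction (using $F\in{\mathcal F}^X_\kappa$ at successors and that unions of injective sequences are injective at limits) that the resulting $g'$ stays injective and disjoint from $\rge(t)$, the sequence $t\concat g'$ is an injective $\kappa$-branch of $\Tbb(F)$ through $t$ following $F$. Taking $t=\langle\rangle$ yields a branch whose initial segments form a filter meeting every $L^X_i\cap\Tbb(F)$, giving $\FA(\Tbb(F),{\mathfrak L}^\kappa(\Tbb(F)))$; taking arbitrary $t$ and truncating $t\concat g'$ at length $i$ shows each $L^X_i\cap\Tbb(F)$ is dense, which upgrades this to $\FA_{{\mathfrak L}^\kappa}$.

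The substantive step is $\DC^{*}_\kappa\Rightarrow\DC_\kappa$, and I expect it to be the main obstacle: $\DC^{*}_\kappa$ only produces \emph{injective} branches, and only over sets $X\not\prec\kappa$, whereas $\DC_\kappa$ must furnish a possibly repeating sequence over an arbitrary set $Z$ with an arbitrary $F:{}^{<\kappa}Z\to{\mathcal P}(Z)\setminus\set{\emptyset}$. When $Z\prec\kappa$ the set $Z$ is well-orderable and the branch is built by straightforward recursion, so I may assume $Z\not\prec\kappa$. To force injectivity I would tag elements by ordinals, passing to $Y=Z\times\kappa$ (still $\not\prec\kappa$, as $\set{z}\times\kappa\approx\kappa$). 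The naive tag ``current length'' fails, because $\DC^{*}_\kappa$ ranges over \emph{all} injective $Y$-sequences, including ones whose second coordinates already collide with the length; the fix is to tag with the \emph{least unused} ordinal. Concretely, for an injective $T\in(Y)^{<\kappa}$ with first-coordinate projection $\bar T\in{}^{<\kappa}Z$, put $G(T)=F(\bar T)\times\set{\rho_T}$, where $\rho_T=\min(\kappa\setminus\setof{c}{(z,c)\in\rge(T)})$; this is well defined since fewer than $\kappa$ second coordinates have been used and $\kappa$ is regular, and it guarantees $T\concat y\in(Y)^{<\kappa}$ for each $y\in G(T)$, so $G\in{\mathcal F}^Y_\kappa$. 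Applying $\DC^{*}_\kappa$ to $G$ gives an injective $\hat g:\kappa\to Y$ with $\hat g(i)\in G(\hat g\restrict i)$; reading off first coordinates, $g:=\pi_Z\circ\hat g$ satisfies $g(i)\in F(g\restrict i)$ for all $i<\kappa$, which is $\DC_\kappa$. The only point needing care is verifying that the ``least unused tag'' rule keeps $G$ within ${\mathcal F}^Y_\kappa$ on \emph{every} injective sequence, not merely the ones the recursion visits.
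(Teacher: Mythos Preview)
Your proposal is correct and follows the same four-step cycle as the paper's proof. The two minor differences are worth noting. For $\DC_\kappa\Rightarrow\FA_{{\mathfrak L}^\kappa}({\mathcal T}_\kappa)$, the paper does not relativise above $t$ but instead replaces $F$ by $F_t$ where $F_t(t\restrict i)=\{t(i)\}$ for $i<\lh(t)$ and $F_t=F$ elsewhere, so that the branch produced by $\DC_\kappa$ is forced to begin with $t$; your approach via $\hat F(s)=F(t\concat s)$ achieves the same end and your inductive check that the branch stays injective is exactly what is needed. For $\DC^*_\kappa\Rightarrow\DC_\kappa$, the paper also passes to $X\times\kappa$ but uses a different tag: it sends $t\in{}^{<\kappa}X$ to the sequence $u^t$ with $u^t_i=(t_i,\otp\{j<i:t_j=t_i\})$, and then defines $G(u)$ by a case split according to whether $u$ is of the form $u^t$ (if so, tag each $x\in F(t)$ by its prior occurrence count in $t$; if not, set $G(u)=F^{X\times\kappa}_{seq}(u)$). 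Your ``least unused second coordinate'' rule avoids this case split and gives a uniform definition of $G$ on all of $(Z\times\kappa)^{<\kappa}$, which is arguably cleaner; the paper's scheme has the mild advantage that the tag of an element depends only on the first-coordinate projection, so one sees immediately that projecting recovers a $\DC_\kappa$-branch. Both tagging schemes do the job.
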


    \begin{proof} Suppose $\Tbb\in {\mathcal T}_\kappa$ and let $X\not\prec\kappa$ and $F\in {\mathcal F}^X_\kappa$ be such that
      $\Tbb=\Tbb(F)$.

      Applying $\DC_\kappa$ to $F$ there is some $g:\kappa \longrightarrow X$ such that for all $i<\kappa$ we have
      $g(i) \in F(g\on i)$. Then $G=\setof{g\on i}{i<\kappa}$ is $\Tbb$-generic for $\tupof{L^\kappa_i}{i<\kappa}$.

      Moreover, if $t\in \Tbb$ then consider the function $F_t\in {\mathcal F}^X_\kappa$ given by
      \begin{eqn}
        \begin{split}
          F_t(s) = & \sss\sss \set{t(i)}\hbox{ if }i<\lh(t)\hbox{ and }s =t\on i , \\
        = & \sss\sss F(s) \hbox{ otherwise.}
        \end{split}
      \end{eqn}

      Applying $\DC_\kappa$ to $F_t$ there is some $g_t:\kappa \longrightarrow X$ such that for all $i<\kappa$ we have
      $g_t(i) \in F(g_t\on i)$ and for $j\le\lh(t)$ we have $g_t\on j = t\on j$. Thus for each $\lh(t) \le i<\kappa$ we  
      $g_t\on i \le t$ and $g_t\on i \in L^X_i$. Hence for each $i<\kappa$ we have $L^X_i$ is dense in $\Tbb$.

      It is immediate that $ \FA_{ {\mathfrak L}^\kappa}({\mathcal T}_\kappa)      \Longleftrightarrow
     \FA({\mathcal T}_\kappa, {\mathfrak L}^\kappa)$
      
      Given $F$ as in the statement of $\DC^{*}_{\kappa}$  apply $\FA({\mathcal T}_\kappa, {\mathfrak L}^\kappa)$ to $\Tbb(F)$.
      The union of the generic filter will, as in the proof of Proposition (\ref{FA_coll_iff_DC_seq}), 
      give us an injection as required for $\DC^*_{\kappa}$.

      Finally, we need to show that $\DC^{*}_\kappa$ implies $\DC_\kappa$.

      The idea is to take sequences from ${^{<\kappa}X}$ and replace them by sequences from
      $(X\times \kappa)^{<\kappa}$, where the second coordinate contains a marker keeping score
      of how many times the element in the first coordinate has already appeared in the original sequence.
      (Notice that this is one of the few places where we do not have a ``set-by-set'' equivalence.)

      Suppose $F:{^{<\kappa}X}\longrightarrow {\mathcal P}(X)\setminus \set{\emptyset}$. 

      For $\alpha<\kappa$ and $t\in {^{\alpha}X}$ define $u^t \in (X\times \kappa)^{\alpha}$ by
      $u^t_i = \tup{t_i,\gamma^t_i}$ where $\gamma^t_i = \otp(\tupof{j<i}{t_j=t_i})$.
      
      We define $G:(X\times \kappa)^{<\kappa}\longrightarrow {\mathcal P}(X\times \kappa)\setminus \set{\emptyset}$
      by
      \begin{eqn}
        \begin{split}
          G(u) = & \sss\sss \setof{\tup{x,\delta^t_x}}{x\in F(t) \sss\sss\& \sss\sss
        \delta^t_x = \otp(\tupof{j<i}{t_j=x})}\hbox{ if }u = u^t_i , \\
        = & \sss\sss F^{X\times \kappa}_{seq} \hbox{ otherwise.}
        \end{split}
      \end{eqn}
      
      We can thus apply $\DC^*_\kappa$ to $G$ giving us a function $g:\kappa \longrightarrow X\times \kappa$
      such that for all $i<\kappa$ we have $g(i) \in G(g\on i)$.

      Now we can define $f:\kappa\longrightarrow X$ by $f(i) = g(i)_0$, the first coordinate of the pair $g(i)$.
      By induction we have that $f(i) \in F(f\on i)$ as required.
    \end{proof}

    \begin{remark} By a very similar proof we could also show for regular cardinals $\kappa$ the equivalence of $\DC_\kappa$
      and the forcing axiom stating that there is a $\Pbb$-generic for ${\mathfrak L}^\kappa$ for every $\Pbb$ which
      is a $\kappa$-closed sub partial order of $\Coll(\kappa,X)$ with no maximal branches for some $X\not\prec\kappa$.
    \end{remark}

    \vskip12pt
    
    \begin{corollary} $\DC \Longleftrightarrow \FA_\omega(\Lambda) $.
      \end{corollary}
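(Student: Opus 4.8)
The plan is to reduce everything to the Proposition characterising $\DC_\kappa$, which for $\kappa=\omega$ reads $\DC_\omega \Longleftrightarrow \FA(\mathcal{T}_\omega,\mathfrak{L}^\omega)$, together with the standard equivalence $\DC \Longleftrightarrow \DC_\omega$. Granting these, it suffices to establish the two implications $\DC \Longrightarrow \FA_\omega(\Lambda)$ and $\FA_\omega(\Lambda)\Longrightarrow \FA(\mathcal{T}_\omega,\mathfrak{L}^\omega)$; the second step mirrors Proposition (\ref{prop_FA_Lambda_implies_FA_Coll}), with the trees $\Tbb(F)$ playing the role that $\Qbb^X_\omega$ played there.

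For $\DC \Longrightarrow \FA_\omega(\Lambda)$ I would use no special feature of $\Lambda$ at all: $\DC$ gives the countable forcing axiom for \emph{every} partial order. Given $\Pbb\in\Lambda$ and dense sets $\tupof{D_n}{n<\omega}$, apply dependent choice to the relation ``$q\le p$ and $q\in D_n$'' to produce a descending chain $p_0\ge p_1\ge\cdots$ with $p_n\in D_n$ for each $n$ (density guarantees the relation is total), and let $G$ be the upward closure of the chain; then $G$ is a filter meeting every $D_n$.

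For the reverse implication, fix $\Tbb(F)\in\mathcal{T}_\omega$ with $X\not\prec\omega$ (so $X$ is infinite) and $F\in\mathcal{F}^X_\omega$. First I would note that each $L^X_i$ is dense in $\Tbb(F)$: since $F$ takes nonempty values and $t\concat x\in(X)^{<\omega}$ whenever $x\in F(t)$, every node of $\Tbb(F)$ has a one-step extension inside $\Tbb(F)$, and iterating finitely often extends any condition to length $\ge i$. (This automatic density is exactly the feature peculiar to $\kappa=\omega$: no limit stages arise, so no genuine choice is needed to keep the tree growing.) Second, I would show $\Tbb(F)\in\Lambda$. Since $\Tbb(F)$ is an initial-segment-closed subtree of $\Coll(\omega,X)$, its image under the cumulative-range map $f\mapsto\tupof{\rge(f\on j+1)}{j<\dom(f)}$ of Lemma (\ref{equiv_Coll_Qbb}) is an initial-segment-closed set of finite sequences that are strictly decreasing in the strict lattice $([X]^{<\omega},\supsetneq)$, exactly as in the observation that $\Qbb^X_\omega\in\Lambda$ used in Proposition (\ref{prop_FA_Lambda_implies_FA_Coll}). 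As an initial-segment-closed subtree of a tree in $\Lambda$ is again in $\Lambda$ (the witnessing lattice is unchanged), $\Tbb(F)\in\Lambda$ up to isomorphism. Applying $\FA_\omega(\Lambda)$ to this image together with the dense images of $\tupof{L^X_i}{i<\omega}$, and pulling the resulting generic filter back along the isomorphism, yields a $\Tbb(F)$-generic filter for $\mathfrak{L}^\omega(\Tbb(F))$. As $\Tbb(F)$ was arbitrary this is $\FA(\mathcal{T}_\omega,\mathfrak{L}^\omega)$, hence $\DC_\omega$, hence $\DC$.

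The main obstacle is the membership $\Tbb(F)\in\Lambda$, and in particular the verification that $([X]^{<\omega},\supsetneq)$ is an admissible lattice: it is strict and a lattice with $A\wedge B=A\cup B$ and $A\vee B=A\cap B$, it has the finite predecessor property (the predecessors of a finite $A$ are its finitely many proper subsets), and it has no minimal elements (as $X$ is infinite, every finite subset has a proper finite superset). One must also confirm that the cumulative-range map is an order isomorphism respecting initial segments, so that generic filters transfer between $\Tbb(F)$ and its image. Everything else is either bookkeeping or a direct appeal to the Proposition characterising $\DC_\kappa$.
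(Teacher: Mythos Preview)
Your proposal is correct and follows essentially the same route as the paper: for the forward direction both you and the paper invoke the fact that $\DC$ yields the countable forcing axiom for every partial order, and for the reverse direction both transport $\Tbb(F)$ into $\Lambda$ via the cumulative-range isomorphism of Lemma~(\ref{equiv_Coll_Qbb}) and then appeal to the Proposition characterizing $\DC_\kappa$ at $\kappa=\omega$. Your write-up is simply more explicit on points the paper leaves terse---most notably the density of each $L^X_i$ in $\Tbb(F)$ (which you rightly observe needs no choice because only finitely many extension steps are required) and the verification that $([X]^{<\omega},\supsetneq)$ satisfies the lattice, fpp, and no-minimal-element requirements for membership in $\Lambda$.
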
 

    \begin{proof} ``$\Longrightarrow$''. $\DC$ implies  $\FA_\omega(\setof{\Pbb }{\Pbb \hbox{ is a po} })$, and so
      implies $\FA_\omega(\Lambda)$.

      ``$\Longleftarrow$'' Suppose $\Tbb = \Tbb(F)\in {\mathcal T_\omega}$ for some
      $X\not\prec\kappa$ and $F\in  {\mathcal F}^X_\omega$. By definition we have
      $\Tbb = \setof{t\in \Coll(\omega,X)}{\forall i<\lh(t)\sss\sss t(i) \in F(t\on i)}$. We can apply the isomorphism of partial orders
      defined in Lemma (\ref{equiv_Coll_Qbb}) between $\Coll(\kappa,X)$ and $\Qbb^X_\kappa$ to $\Tbb$ to obtain an isomorphic subpartial order of 
      $\Qbb^X_\kappa$ which is a member of $\Lambda$. 
      
      Hence $\FA_\omega(\Lambda)$ implies $\FA({\mathcal T_\omega},{\mathfrak L^\omega})$, since for each
      $\Tbb\in {\mathcal T_\omega}$ and each $i<\omega$ the set $L^\omega_i$ is dense in $\Tbb$,
      and this in turn implies $\DC$ by the previous proposition in the case $\kappa=\omega$.
      \end{proof}

    \begin{corollary} $\AC \Longleftrightarrow \hbox{ for all regular }\kappa \hbox{ we have }
      \FA({\mathcal T}_\kappa,{\mathfrak L}^\kappa)$.
      \end{corollary}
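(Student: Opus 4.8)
The plan is to read this off from the Proposition immediately above (which identifies, for each regular initial ordinal $\kappa$, the axiom $\DC_\kappa$ with the forcing axiom $\FA({\mathcal T}_\kappa,{\mathfrak L}^\kappa)$) together with the Hartogs result, Proposition (\ref{Hartogs_theorem}). In view of that Proposition the asserted equivalence reduces to the purely choice-theoretic statement $\AC \Longleftrightarrow (\forall\hbox{ regular }\kappa)\sss\DC_\kappa$, and I would prove the two directions of this separately.

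For ``$\Longrightarrow$'' I would first recall the standard fact that $\AC$ implies $\DC_\kappa$ for every ordinal $\kappa$: given $F:{^{<\kappa}X}\longrightarrow {\mathcal P}(X)\setminus\set{\emptyset}$, fix a well-ordering of $X$ and build $g:\kappa\longrightarrow X$ by transfinite recursion, taking $g(i)$ to be the least element of $F(g\on i)$ in that well-ordering, which is legitimate since each $F(g\on i)$ is nonempty. In particular $\DC_\kappa$ holds for every regular $\kappa$, and then the Proposition relating $\DC_\kappa$ to $\FA({\mathcal T}_\kappa,{\mathfrak L}^\kappa)$ delivers $\FA({\mathcal T}_\kappa,{\mathfrak L}^\kappa)$ for each such $\kappa$.

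For ``$\Longleftarrow$'' I would assume $\FA({\mathcal T}_\kappa,{\mathfrak L}^\kappa)$ holds for all regular $\kappa$; by that same Proposition $\DC_\kappa$ then holds for every regular $\kappa$. As explained in \S\ref{DC_varts}, $\Ha_\kappa$ is precisely $\DC_\kappa$ restricted to the special functions $F^X_{seq}(t)=X\setminus\rge(t)$ (which are well defined exactly when $X\not\prec\kappa$), so $\DC_\kappa$ implies $\Ha_\kappa$. Hence $\Ha_\kappa$ holds for every regular $\kappa$. Since the regular cardinals form a proper class of ordinals, Proposition (\ref{Hartogs_theorem}), implication (iii)$\Rightarrow$(i), yields $\AC$.

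Since the substantive content is already carried by the preceding Proposition and by Proposition (\ref{Hartogs_theorem}), I expect no genuine obstacle here: the corollary is essentially an assembly of results already in hand. The only points needing (routine) verification are the two elementary implications $\AC\Rightarrow\DC_\kappa$ and $\DC_\kappa\Rightarrow\Ha_\kappa$, neither of which is problematic.
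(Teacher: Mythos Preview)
Your overall strategy is correct, and the forward direction is fine. There is, however, a genuine gap in your backward direction: the sentence ``Since the regular cardinals form a proper class of ordinals'' is not a theorem of ZF. Gitik has shown it consistent with ZF that $\omega$ is the \emph{only} regular cardinal, so in ZF alone ``$\Ha_\kappa$ for all regular $\kappa$'' could in principle amount to just $\Ha_\omega$, which certainly does not yield $\AC$ via Proposition~(\ref{Hartogs_theorem})(iii). You therefore cannot simply invoke (iii)$\Rightarrow$(i) without first arguing, from your hypotheses, that there really is a proper class of regular cardinals.

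The clean fix is already present in the paper: use L\'evy's corollary following Proposition~(\ref{Levy_sing}). From $\DC_\kappa$ for every regular $\kappa$ that corollary gives $\DC_\alpha$ for every singular initial ordinal $\alpha$ as well, hence $\DC_\alpha$ for all cardinals $\alpha$, and so $\Ha_\alpha$ for all $\alpha$; now Proposition~(\ref{Hartogs_theorem})(ii)$\Rightarrow$(i) applies without any worry about how many regular cardinals there are. (Alternatively one can bootstrap: $\DC_\lambda$ implies $\AC_\lambda$, which implies $\lambda^+$ is regular, so the hypothesis manufactures its own proper class of regulars; but routing through L\'evy's result is tidier and is the route the paper has set up.) With this one-line adjustment your argument goes through.
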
 
    
%% \[ \qquad \ast\qquad \ast \qquad \ast \]

\vfill\eject

\Addresses

\end{document}